\numberwithin{equation}{section}
\newtheorem{thm}{Theorem}[section]
\newtheorem{mthm}{Theorem}
\newtheorem{lem}[thm]{Lemma}
\newtheorem{prop}[thm]{Proposition}
\newtheorem{defn}[thm]{Definition}
\newcommand{\pt}{\partial}
\begin{document}

\title[Sharp Height estimate for CMC Spacelike Graph]{A Sharp Height Estimate for the Spacelike Constant Mean Curvature  Graph in the Lorentz-Minkowski Space }
\author{Jingyong Zhu}
\address{School of Mathematical Sciences,  University of Science and Technology of China}
\email{zjyjj0224@gmail.com}

\subjclass[2010]{35B38,35B45,35J93,53C42,53C50}%
\keywords{height estimate; critical point; constant mean curvature; a priori estimates; Lorentz-Minkowski space. }
\begin{abstract}
  In this paper, based on the local comparison principle in \cite{chen1982convexity}, we study the local behavior of the difference of two spacelike graphs in a neighborhood of a second contact point. Then we apply it to the constant mean curvature equation in 3-dimensional Lorentz-Minkowski space $\mathbb{L}^3$ and get the uniqueness of  critical point for the solution of such equation over convex domain, which is an analogue of the result in \cite{sakaguchi1989uniqueness}. Last, by this uniqueness, we obtain a minimum principle for a functional depending on the solution and its gradient. This gives us a sharp gradient estimate for the solution, which leads to a sharp height estimate.
\end{abstract}
\maketitle
\section{Introdution}    
Spacelike constant mean curvature(CMC) hypersurfaces and CMC foliation play an important role in general relativity.  Such surfaces are important because they provide Riemannian submanifolds
with properties which reflect those of the spacetime. For example, if the weak energy condition is satisfied, then a maximal hypersurface has positive scalar curvature. So the geometric properties of such hypersurfaces are worth researching. In particular, the existence of such hypersurface is a fundamental problem. Under the graph setting and some assumptions, Robert Bartnik and Leon Simon\cite{bartnik1982spacelike} got a sufficient and necessary condition for the existence of  
\begin{equation}
\begin{cases}
  \mathrm{div}(\frac{Du}{\sqrt{1-|Du|^2}})=H(x,u),\ \  |Du|<1 \ \text{in} \ \Omega\subset\mathbb{R}^n,  \\
  u=\phi \ \ \ \text{on} \ \ \pt\Omega,
\end{cases}
\end{equation}
where $\mathrm{div}$ stands for divergence operator in the Euclidean plane $\mathbb{R}^n$ and 
\begin{equation}
Du=(u_1,\dots,u_n), \  \  u_i=\frac{\partial u}{\partial x_i}.
\end{equation} In particular, the Theorem 3.6 in \cite{bartnik1982spacelike} gives us a solution $u\in C^{\infty}(\overline\Omega)$ to
\begin{equation}
\begin{cases}
  \mathrm{div}(\frac{Du}{\sqrt{1-|Du|^2}})=nH,\ \  |Du|<1 \ \text{in} \ \Omega,  \\
  u=0 \ \ \ \text{on} \ \ \pt\Omega,
\end{cases}
\end{equation}
over a bounded $C^{2,\alpha}$ domain $\Omega$ with $H$ being a positive constant. In this case, they pointed out that $\nu_{n+1}=\frac{1}{\sqrt{1-|Du|^2}}$ satisfies following elliptic equation
\begin{equation}
\Delta_M\nu_{n+1}=\nu_{n+1}\|A\|^2+\frac{1}{1-|Du|^2}\frac{\partial u}{\partial x_i}\frac{\partial H}{\partial x_i},
\end{equation}
where $\Delta_M$ and $A$ denote the Laplace operator and the second fundamental form of the graph $M=\{(x,u(x)):x\in\mathbb{R}^n, u\in C^{\infty}(\mathbb{R}^n)\}$. So the boundary gradient estimate is the most important step leading to the existence of $u$. To do so, they used the following spherically symmetric barrier functions 
\begin{equation}
w^{\pm}=w^{\pm}(\xi)\pm\int_0^{|x-\xi|}\frac{K-Ht^n}{\sqrt{t^{2n-2}+(K-Ht^n)^2}}\rm{d}t
\end{equation}
where $K$ is a positive constant. From the proof of the Proposition 3.1, one can get following boundary gradient estimate
\begin{equation}
\max\limits_{\partial \Omega}|Du| \leq\frac{1-H\varepsilon^{n+1}}{\sqrt{\varepsilon^{2n}+(1-H\varepsilon^{n+1})^2}},
\end{equation}
where $\varepsilon=\varepsilon(\Omega)$ is a sufficiently small constant. Obviously, this bound is not sharp. Also, the dependence of $\varepsilon$ on $\Omega$ is not specific. As for the $C^0$ norm of the solution $u$, since the graph is spacelike, they roughly used the diameter of the domain $\Omega$ to control it. So the question is, can we give a sharp $C^0$ or $C^1$ estimate for the solution in term of the boundary geometry?

Early in 1979,  Lawrence E. Payne and G{\'e}rard A. Philippin\cite{payne1979some} have used so-called P-function to derive sharp $C^0$ and $C^1$ upper bounds for the solution of following Dirichlet problem
\begin{equation}\label{eq1.6}
\begin{cases}
  \mathrm{div}(\frac{Du}{\sqrt{1+|Du|^2}})=-2H,\ \  |Du|<1 \ \text{in} \ \Omega,  \\
  u=0 \ \ \ \text{on} \ \ \pt\Omega,
\end{cases}
\end{equation}
over a strictly convex domain $\Omega\subset\mathbb{R}^2$ with $H$ being a positive constant. The key is a maximum principle for following P-function
\begin{equation}
\Phi(x, \alpha)=\int_0^{q^2}\frac{g(\xi+2\xi g'(\xi))}{\rho}\rm{d}\xi+\alpha\int_0^uf(\eta)\rm{d}\eta,
\end{equation}
where $u, g, \rho, f, q$ satisfy
\begin{equation}\label{}
\begin{split}
&(g(q^2)u_i)_i+\rho(q^2)f(u)=0,\\
&g(\xi)+2xig'(xi)>0,  \  \text{for}  \  \forall\xi\geq0,\\
&\rho>0, \  \   g>0, \  \   q^2=|Du|^2=\sum u_i^2.
\end{split}
\end{equation} 
In the same year, by the uniqueness of critical point for solution and the strictly convexity of the domain, G. A. Philippin\cite{philippin1979minimum} also got a minimum principle for $\Phi(x, \alpha)$ provoided $\alpha>1$ and used it to derive lower bounds for $C^0$ and $C^1$ norms of the solution. But he did not assert the sharpness of the estimates, since he did not have a similar minimum principle for $\Phi(x, 1)$  at that time. In 2000, Xi-Nan Ma\cite{ma2000sharp} solved this issue through uniqueness of critical point and analyticity of the solution. He took a long computation to show that all the derivatives of $\Phi(x, 1)$ vanish at the unique critical point if $\Phi(x, 1)$ takes its minimum value at that point. By the strong unique continuation of analytic function, $\Phi(x, 1)$ is a constant. Once has this minimum principle, one can get the sharpness. 

For our question, the maximum principle in \cite{payne1979some} still works. So the upper bound of gradient estimate and the lower bound of the minimum value are easy to derive, which will be given in this paper later. However, the minimum principle is not available any more. In this paper, the author want to prove a minimum principle for $\Phi(x, 1)$ when $u$ is a spacelike CMC graph solving 
\begin{equation}\label{eq13}
\begin{cases}
  \mathrm{div}(\frac{Du}{\sqrt{1-|Du|^2}})=2H,\ \  |Du|<1 \ \text{in} \ \Omega,  \\
  u=0, \ \ \ \text{on} \ \ \pt\Omega,
\end{cases}
\end{equation}
and use it to derive sharp $C^0$ and $C^1$ bounds for the solution to $\eqref{eq13}$.

As we mentioned above, the uniqueness of critical point is the important ingredient which itself is also worth to be studied. There are some results on it. In \cite{philippin1979minimum}, G. A. Philippin showed that the solution to $\eqref{eq1.6}$ has only one critical point when $\Omega$ is strictly convex. His method of proof is based on an idea of L. E. Payne\cite{payne1973two}.  In \cite{chen1984uniqueness}, Jin-Tzu Chen proved that the uniqueness of critical point for solution to
 \begin{equation}\label{eq15}
\begin{cases}
  \mathrm{div} \ Tu=2H\ \ \ \text{in} \ \Omega,  \\
  Tu\cdot\nu=1,   \  \text{on} \ \ \pt\Omega,
\end{cases}
\end{equation}
where $\Omega\subset\mathbb{R}^2$ is a bounded convex domain with out normal $\nu$ on the boundary $\pt\Omega$, $H$ is a positive constant,  and
\begin{equation}\label{eq12}
Tu=(\frac{u_x}{\sqrt{1+u_x^2+u_y^2}},\frac{u_y}{\sqrt{1+u_x^2+u_y^2}}),\ \ \ Du=(u_x,u_y),
\end{equation}
$u_x$,$u_y$ being partial derivatives. His proof is based on a nice comparison technique and the result   in \cite{chen1982convexity} and the method of continuity with respect to the contact angle. Later, Shigeru Sakaguchi\cite{sakaguchi1989uniqueness} showed that the solution to
 \begin{equation}\label{eq14}
\begin{cases}
  \mathrm{div} \ Tu=2H\ \ \ \text{in} \ \Omega,  \\
  u=0, \ \ \ \text{on} \ \ \pt\Omega,
\end{cases}
\end{equation}
or
 \begin{equation}\label{eq15}
\begin{cases}
  \mathrm{div} \ Tu=2H\ \ \ \text{in} \ \Omega,  \\
  Tu\cdot\nu=\cos\gamma, \ \ \gamma\in(0,\frac{\pi}{2}) \ \text{on} \ \ \pt\Omega,
\end{cases}
\end{equation}
has only one critical point under the hypothesis of the existence of the solution over a bounded convex domain $\Omega\subset\mathbb{R}^2$.

 Another motivation to study uniqueness of critical point for solution to $\eqref{eq13}$  is from a recent paper \cite{albujer2015convexity}.  As we know, CMC spacelike hypersurfaces are very different to those in Euclidean space. For example, Corollary 12.1.8 in \cite{lopez2013constant} tells us any compact spacelike surface immersed in $\mathbb{L}^3$ spanning a plane simple closed curve is a graph over a spacelike plane, which is not true in $\mathbb{R}^3$. Therefore, up to an isometry, we only need to consider the solution to the Dirichlet problem $\eqref{eq13}$ . Recently, Alma L. Albujer, Magdalena Caballero and Rafael L{\'o}pez\cite{albujer2015convexity} have proved the following interesting theorem on the convexity of the solutions to $\eqref{eq13}$.
  \begin{mthm}\label{thm A}\cite{albujer2015convexity}
  Let $\Sigma$ be a spacelike compact surface in $\mathbb{L}^3$ with constant mean curvature $H\neq0$(H-surface for short), such that its boundary is a planar curve which is pseudo-elliptic. Then $\Sigma$ has negative Gaussian curvature in all its interior points. In particular, $\Sigma$ is a convex surface.
  \end{mthm}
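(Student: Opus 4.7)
The plan is to show that the Hessian of the height function is sign-definite throughout the interior, which by the Gauss equation in $\mathbb{L}^3$ is equivalent to $K<0$ and gives convexity. Concretely, for a spacelike graph $z=u(x,y)$ with $\abs{Du}<1$ one has $K=-\det(u_{ij})/(1-\abs{Du}^2)^2$, so the claim reduces to $\det(D^2 u)>0$ on the planar region $\Omega$ bounded by $\pt\Sigma$.

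First, I would invoke Corollary 12.1.8 of \cite{lopez2013constant} to realize $\Sigma$ as such a graph over a bounded planar domain $\Omega$ whose boundary is the given pseudo-elliptic curve, with $u$ solving a Dirichlet problem of the form \eqref{eq13} after a vertical translation. The pseudo-elliptic assumption should translate into strict convexity of $\pt\Omega$ in the induced planar geometry, which is the essential geometric input for what follows.

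Next comes the boundary analysis. Since $u$ is constant on $\pt\Omega$, differentiating twice in the tangential direction yields $u_{\tau\tau}=\kappa\,u_\nu$ on $\pt\Omega$, where $\kappa>0$ is the geodesic curvature (positive by strict convexity) and $u_\nu\neq 0$ by the Hopf lemma. The CMC equation \eqref{eq13} then determines $u_{\nu\nu}$ in terms of $H$, $u_\nu$, and $\kappa$; a short computation should show that the two eigenvalues of $D^2 u$ are strictly of the same sign on $\pt\Omega$.

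The interior step is to propagate sign-definiteness inward. I would apply a constant rank theorem for the Hessian in the spirit of Caffarelli--Friedman or Caffarelli--Guan--Ma: linearising the CMC operator should yield an elliptic differential inequality for the smallest eigenvalue of $D^2 u$ that, by the strong maximum principle, cannot vanish at an interior point unless it vanishes identically---contradicting the boundary definiteness just established. Alternatively, one could treat $K$ directly as a function on $\Sigma$, derive its equation from the Codazzi equations together with the CMC condition, and apply a maximum principle intrinsic to $\Sigma$.

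The main obstacle is verifying the structural hypotheses of the constant rank theorem (notably concavity in the Hessian variable and the correct drift signs) for the spacelike CMC operator, whose coefficient $1/\sqrt{1-\abs{Du}^2}$ degenerates as $\abs{Du}\to 1$. A secondary difficulty is ensuring the boundary argument remains valid when the containing plane of $\pt\Sigma$ is timelike or null rather than spacelike, which may require choosing a different graph direction and reinterpreting the pseudo-elliptic condition accordingly.
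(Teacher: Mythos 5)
The paper does not prove this statement at all: Theorem~A is imported verbatim from \cite{albujer2015convexity} and is used only as motivation, so there is no in-house proof to compare yours against. (The closest material in the paper, Lemmas~3.2--3.3 and Theorem~3.4, only shows that $K\neq 0$ at \emph{critical points} of $u$ over a convex domain --- a much weaker assertion than $K<0$ at every interior point.) Judged on its own, your sketch has a gap at its very first step. You translate ``pseudo-elliptic'' into ``$\pt\Omega$ is strictly convex'' and reduce to the Dirichlet problem \eqref{eq13} with $u=0$ on $\pt\Omega$. But the paper explicitly records that the authors of \cite{albujer2015convexity} produced an example showing the pseudo-elliptic hypothesis \emph{cannot} be replaced by convexity of the boundary curve; so a proof whose only geometric input is (strict) convexity of $\pt\Omega$ is proving a false statement somewhere. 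The information you are discarding is precisely the causal character of the plane containing $\pt\Sigma$: Corollary~12.1.8 of \cite{lopez2013constant} makes $\Sigma$ a graph over a \emph{spacelike} plane, but the boundary plane itself may be timelike or lightlike, in which case $u$ is a nonconstant affine function on $\pt\Omega$, the identity $u_{\tau\tau}=\kappa u_\nu$ fails, and ``pseudo-elliptic'' is a condition on the curve in the induced Lorentzian (or degenerate) metric rather than Euclidean strict convexity. This is not a ``secondary difficulty'' to be deferred --- it is where the hypothesis actually does its work.

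The interior step is also incomplete as stated. Microscopic constant rank theorems of Caffarelli--Friedman or Caffarelli--Guan--Ma type take as \emph{hypothesis} that $D^2u\geq 0$ throughout and conclude the rank is locally constant; they do not let you deduce $\lambda_{\min}(D^2u)>0$ in $\Omega$ from $\lambda_{\min}>0$ on $\pt\Omega$, because the differential inequality for $\lambda_{\min}$ (or for $\sigma_2(D^2u)$) is only valid on the set where the Hessian is already nonnegative. To close the argument one needs either a deformation/continuity scheme (shrinking $\Omega$ or $H$ to a case where convexity is known and showing the convex set of parameters is open and closed) or a macroscopic concavity maximum principle in the style of Korevaar--Kennington, and in either case the structural hypotheses must be verified for the operator $\mathrm{div}(Du/\sqrt{1-|Du|^2})$, which you acknowledge you have not done. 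As it stands, the proposal is a plausible program for the special case of a spacelike boundary plane, not a proof of Theorem~A.
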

In their paper, they also proved that pseudo-elliptic curves are convex and provided an example that shows the assuption on the boundary can not be replaced by convex curves, but they did not show whether there is a critical point of the solution to $\eqref{eq13}$ with nonnegative Gaussian curvature over a convex domain, which is so-called saddle point? In this paper, we will show that the non-existence of such saddle point is equivalent to the uniqueness of critical point. Notice that the Gaussian curvature in \cite{sakaguchi1989uniqueness} is different from that in the Theorem \ref{thm A}, which is defined in the next section. 

Now, let us state our first result.
       
\begin{thm}\label{thm11}
 Any solution to $\eqref{eq13}$ in a convex domain for $H\neq0$ has only one critical point.
 \end{thm}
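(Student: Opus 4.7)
The plan is to argue by contradiction using the local comparison principle for the difference of two spacelike CMC graphs developed earlier in the paper (the Lorentzian analogue of the nodal lemma in \cite{chen1982convexity}). The overall strategy follows \cite{sakaguchi1989uniqueness}. Assume $H>0$; the case $H<0$ is analogous. By the maximum principle $u<0$ in $\Omega$, and $u$ attains its minimum at some interior critical point; evaluating the CMC equation at any interior critical point $P$ gives $\Delta u(P)=2H>0$, so no critical point is a local maximum. Suppose toward contradiction that $u$ has two distinct critical points $P_1\neq P_2$.

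The comparison function is a reflected copy of $u$. Let $\ell$ be the perpendicular bisector of $\overline{P_1P_2}$, $R$ the corresponding reflection, set $\tilde u(x):=u(Rx)$, and consider the convex, $\ell$-symmetric domain $\Omega^*:=\Omega\cap R(\Omega)$. Both $u$ and $\tilde u$ solve the Lorentzian CMC equation with constant $2H$ on $\Omega^*$. Subtracting the two equations along a homotopy of spacelike graphs shows that $w:=u-\tilde u$ satisfies a linear elliptic equation $Lw=0$ in divergence form with analytic coefficients. By construction $w\equiv 0$ on $\ell\cap\Omega^*$; while at $P_1$ (which lies off $\ell$) one has $Dw(P_1)=Du(P_1)-R\,Du(P_2)=0$, so $P_1$ is a critical point of $w$ at which $w$ takes the value $c:=u(P_1)-u(P_2)$.

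The main step is then to apply the local comparison principle to the level set $\{w=c\}$ at $P_1$: since $w-c$ vanishes to order $\ge 2$ at $P_1$ and satisfies $L(w-c)=0$, its zero set near $P_1$ consists of at least four analytic arcs meeting transversally at $P_1$, with $w-c$ alternating sign in the four sectors. These four arcs of $\{w=c\}$ must reach $\partial\Omega^*$ (they cannot close up in $\Omega^*$ without forcing $w\equiv c$ by strong unique continuation). Combined with the nodal arc $\ell\cap\Omega^*$ of $\{w=0\}$ and the boundary values of $w$ (which satisfy $w=-\tilde u\ge 0$ on $\partial\Omega\cap\overline{\Omega^*}$ and $w=u\le 0$ on $R(\partial\Omega)\cap\overline{\Omega^*}$), the forced global sign pattern of $w$ in $\Omega^*$ is incompatible with the convexity of $\Omega$, yielding the contradiction.

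The main obstacle is precisely this last bookkeeping step: assembling the local four-arc structure at $P_1$, the nodal arc $\ell\cap\Omega^*$, and the boundary trace of $w$ into a globally consistent sign pattern in the convex region $\Omega^*$, and showing that none exists. The convexity of $\Omega$ enters essentially in controlling how $\partial\Omega$ and $R(\partial\Omega)$ meet on $\partial\Omega^*$, while the Lorentzian local comparison principle (the novel contribution of the earlier section) supplies the exact local nodal structure at the second contact point $P_1$ that drives the contradiction.
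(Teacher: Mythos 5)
Your proposal replaces the paper's argument (a continuity method in an auxiliary parameter $t$, combined with a second‑order tangency comparison against a hyperbolic cylinder and a Morse‑theoretic level‑set count) by a single Alexandrov‑type reflection across the perpendicular bisector of $\overline{P_1P_2}$. This is a genuinely different route, but it has a gap that I do not see how to close, and you yourself flag the decisive step as ``the main obstacle'' without carrying it out. The problem is quantitative: the contact between $u$ and $\tilde u=u\circ R$ at $P_1$ is only \emph{first} order (the values need not agree, and only the gradients are known to vanish simultaneously), so Bers' expansion applied to $w-c$ gives a leading harmonic polynomial of degree $n\ge 2$, hence only two nodal arcs through $P_1$ and \emph{four} alternating sectors --- two components of $\{w>c\}$ and two of $\{w<c\}$ locally. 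On the other side of the ledger, $\partial\Omega^*$ decomposes into arcs of $\partial\Omega$ (where $w\ge 0$) and arcs of $R(\partial\Omega)$ (where $w\le 0$), and the convexity of $\Omega$ and $R(\Omega)$ only bounds the number of $\partial\Omega$-arcs by two. Two forced components against an allowance of two is not a contradiction: one can consistently route two components of $\{w>c\}$ to the $\partial\Omega$-portion and two of $\{w<c\}$ to the $R(\partial\Omega)$-portion, with the nodal line $\ell\cap\Omega^*$ of $\{w=0\}$ causing no obstruction when $c\neq 0$. This is precisely why the paper's Lemma 3.2 engineers a \emph{second}-order tangency (choosing the hyperbolic cylinder to match the Hessian at a degenerate critical point), which forces $n\ge 3$, at least six sectors, and three boundary-reaching components of one sign against an allowance of two.

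A second structural gap is that your argument makes no distinction between the three kinds of interior critical points that can a priori occur (nondegenerate minima, saddle points with $K>0$, and degenerate points with $K=0$), whereas the paper must treat them separately: degenerate points are excluded by the cylinder comparison (Lemma 3.1 of the paper), saddles are shown equivalent to multiplicity of minima by a level-set/Morse argument (Theorem 3.1 of the paper), and only then does a connectedness argument in $t\in[0,1]$ --- anchored at the linear problem $t=0$ --- rule out the saddle case. If you want to pursue the reflection idea, you would at minimum need an additional mechanism producing a higher-order contact (or a finer count of the boundary arcs of $\Omega^*$), and you would still need to handle the degenerate case $K(P_1)=0$ where the nodal structure of $w-c$ is not controlled by your local lemma alone.
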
  

 The proof of this theorem is based on the idea of Shigeru Sakaguchi in \cite{sakaguchi1989uniqueness}, where mainly relies on the comparison of a cylinder with the given surface and the continuity method. In the present result, our comparison surface is a connected component of a hyperbolic cylinder, which is an entire graph over $\mathbb{R}^2$ and, in contrast with the Euclidean case, the existence of the solution for any bounded domain is assured by the necessary and sufficient conditions given by Robert Bartnik and Leon Simon\cite{bartnik1982spacelike}.

 As we said before, Theorem \ref{thm11} can be used to derive sharp $C^0$ and $C^1$ bounds for the solution to $\eqref{eq13}$.
 \begin{thm}\label{thm12}
 Let $u\in C^{\infty}(\overline{\Omega})$ be a solution to $\eqref{eq13}$ over a strictly convex domain $\Omega$ for $H\neq 0$ and $K$ be the curvature of the boundary $\pt\Omega$ with respect to the inner normal direction. Then
 \begin{equation}\label{}
 \max\limits_{\overline\Omega}|Du|^2=\max\limits_{\partial\Omega}|Du|^2\leq\frac{H^2}{H^2+K_{min}^2},
 \end{equation}
 \begin{equation}\label{eq16}
 -\frac{1}{H}(\frac{\sqrt{H^2+K_{min}^2}}{K_{min}}-1)\leq\min_{\Omega}u\leq-\frac{1}{H}(\frac{\sqrt{H^2+K_{max}^2}}{K_{max}}-1)
 \end{equation}
 where $K_{min}=\min\limits_{\pt \Omega}K$, $K_{max}=\max\limits_{\pt \Omega}K$, and one of the equality signs holds if and only if the boundary $\pt\Omega$ is a circle.
 \end{thm}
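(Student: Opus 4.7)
The plan is to follow the $P$-function approach of Payne--Philippin \cite{payne1979some}, refined by Ma \cite{ma2000sharp}, with the $\alpha=1$ $P$-function
\[
\Phi(x)=W(x)-Hu(x),\qquad W=\frac{1}{\sqrt{1-|Du|^2}}.
\]
As remarked in the introduction the maximum principle of \cite{payne1979some} carries over to the Lorentzian setting, giving an elliptic differential inequality $a^{ij}\Phi_{ij}+b^i\Phi_i\geq 0$ in $\Omega$ and hence $\max_{\overline\Omega}\Phi=\max_{\partial\Omega}\Phi=\max_{\partial\Omega}W$, the last equality because $u\equiv0$ on $\partial\Omega$. The companion statement that $|Du|^2$ attains its maximum on $\partial\Omega$ follows from (1.4) with constant $H$.

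The next step is a boundary Hopf-type computation. Since $u\equiv0$ on $\partial\Omega$, we have $Du=u_\nu\nu$ there and differentiating along the boundary gives $u_{\tau\tau}=-Ku_\nu$; substituting into $\eqref{eq13}$ yields the boundary identity $W^3u_{\nu\nu}=2H+WKu_\nu$, and a short calculation produces
\[
\Phi_\nu=u_\nu\bigl(H+WKu_\nu\bigr)\quad\text{on }\partial\Omega.
\]
At the boundary point $p^{*}$ realizing $\max_{\partial\Omega}\Phi$, the inequality $\Phi_\nu(p^{*})\leq0$ (valid because $\Phi$ takes its maximum at $p^{*}$ while the inner normal points into $\Omega$), together with $u_\nu<0$, forces $H+WKu_\nu\geq0$, equivalently $|Du(p^{*})|^2\leq H^2/(H^2+K(p^{*})^2)$. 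Since the right-hand side is decreasing in $K$, this gives $\max_{\overline\Omega}|Du|^2\leq H^2/(H^2+K_{\min}^2)$. Evaluating $\Phi$ at the unique critical point $x_0$ of Theorem \ref{thm11} gives $\Phi(x_0)=1-Hu(x_0)$, and the maximum principle $\Phi(x_0)\leq\max_{\partial\Omega}W\leq\sqrt{H^2+K_{\min}^2}/K_{\min}$ yields the left inequality in $\eqref{eq16}$.

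For the right inequality one needs the companion minimum principle $\min_{\overline\Omega}\Phi=\min_{\partial\Omega}\Phi$, which as in Ma \cite{ma2000sharp} rests crucially on Theorem \ref{thm11}. The idea is the following: any interior minimum of $\Phi$ must be a critical point of $u$, because at a non-critical point the operator governing $\Phi$ is non-degenerate and the strong minimum principle prevents an interior minimum there; by Theorem \ref{thm11} this forces the interior minimum to occur at $x_0$. A long algebraic computation --- the Lorentzian analogue of Ma's key identity --- then shows that at such an interior minimum \emph{every} partial derivative of $\Phi$ vanishes at $x_0$, and real-analyticity of $u$ together with strong unique continuation forces $\Phi\equiv 1-Hu(x_0)$ throughout $\Omega$. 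Hence in the generic case $\min\Phi$ is attained on $\partial\Omega$. Redoing the Hopf computation at the boundary minimum point $p_{**}$ of $\Phi$, where now $\Phi_\nu(p_{**})\geq0$, yields the reverse bound $|Du(p_{**})|^2\geq H^2/(H^2+K(p_{**})^2)$; monotonicity of $K\mapsto\sqrt{H^2+K^2}/K$ gives $\min_{\partial\Omega}W\geq\sqrt{H^2+K_{\max}^2}/K_{\max}$, and combining with $\Phi(x_0)\geq\min_{\partial\Omega}W$ produces the right inequality in $\eqref{eq16}$. Equality in either bound forces $\Phi$ to be constant, hence $K$ constant on $\partial\Omega$, so that $\partial\Omega$ is a circle and the explicit hyperboloidal cap realizes equality.

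The main obstacle is the minimum principle: adapting Ma's derivative-vanishing identity to the Lorentzian symbol $W=(1-|Du|^2)^{-1/2}$. The inductive structure of the argument carries over, but the derivatives of $W$ now carry negative powers of $1-|Du|^2$ and the combinatorics of the higher-order identities differ from the Euclidean case, making the algebra noticeably more delicate; the uniqueness of critical point provided by Theorem \ref{thm11} is nonetheless the decisive structural input that makes the whole scheme run.
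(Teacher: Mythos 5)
Your route is the paper's route: the Payne--Philippin $P$-function $\Phi=W-Hu$ (the paper uses the affine rescaling $2(W-1-Hu)$, which changes nothing), the maximum principle to reduce to the boundary, the tangential--normal decomposition of the equation on $\pt\Omega$ giving $W^3u_{\nu\nu}=2H+WKu_\nu$ and hence $\Phi_\nu=u_\nu(H+WKu_\nu)$, the Hopf sign argument at the boundary extremum of $\Phi$, and the companion minimum principle resting on Theorem \ref{thm11}. All the computations you actually carry out are correct and coincide with the paper's (6.5)--(6.14); your equality discussion via constancy of $K$ on $\pt\Omega$ is a slight and legitimate variant of the paper's appeal to Serrin's symmetry theorem.

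The gap is the minimum principle itself. You correctly reduce it to the statement: \emph{if $\Phi(x;1)$ attains its minimum at the unique interior critical point $P$, then $\Phi$ is constant} (the paper's Theorem \ref{thm61}), and you correctly identify that this is what makes the right-hand inequality in \eqref{eq16} and its sharpness work. But you then assert that ``a long algebraic computation --- the Lorentzian analogue of Ma's key identity --- shows that every partial derivative of $\Phi$ vanishes at $x_0$,'' while simultaneously conceding that ``the combinatorics of the higher-order identities differ from the Euclidean case, making the algebra noticeably more delicate.'' That computation is not a routine transplant and is precisely the content of the paper's final section: a four-step induction on the order $n$ of vanishing, in which one expands $\Phi-\Phi(P)$ in polar coordinates, observes that a nonzero leading harmonic term of degree $n+1\ge 3$ would produce at least three nodal lines through $P$ and hence too many components of $\{\Phi<\Phi(P)\}$, contradicting the fact (from the singular-coefficient equation of Lemma \ref{lem61}) that $\Phi$ can only attain its minimum on $\pt\Omega$ or at $P$; closing the induction requires the explicit recursions for $u_{x_1^mx_2^{k-m}}(P)$ obtained by repeatedly differentiating \eqref{eq13}, e.g.\ $u_{x_1^{2p}}(P)=(-1)^{p+1}(2p-1)[(2p-3)(2p-5)\cdots1]^2H^{2p-1}$, whose signs and coefficients are specific to the Lorentzian symbol $(1-|Du|^2)^{-1/2}$. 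Without verifying these identities the claim that all derivatives of $\Phi$ vanish at $P$ --- and hence the unique-continuation step, the inequality $q_{\min}^2\ge H^2/(H^2+K_{\max}^2)$, and the right half of \eqref{eq16} --- remains unproved. Everything else in your proposal stands.
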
  

At this point, we should give a remark. When $H\neq 0$ and $\Omega$ is a round disc of radius R (which is centered at the origin), then
\begin{equation}
u(x,y)=\sqrt{x^2+y^2+\frac{1}{H^2}}-\sqrt{R^2+\frac{1}{H^2}},
\end{equation}
whose graph is a so-called hyperbolic cap\cite{lopez2013constant}.

 This article is organized as follows. In section 3, we will investigate the local behavior of the difference of two spacelike graphs in a neighborhood of a second contact point. In section 4, we will prove a necessary and sufficient condition for uniqueness of minimal point of the solution to $\eqref{eq13}$, which is a key step in the proof of the Theorem \ref{thm11} in section 5. In the end, based on the uniqueness of critical point, we will prove a minimum principle and use it to get the sharp estimates in Theorem \ref{thm12}.

\section{Notions and local comparison technique}
For easier reading, let us recall some background knowledge of Lorentzian geometry. More details can be found in \cite{lopez2013constant}. Let $\mathbb{L}^3$ be the 3-dimensional Lorentz-Minkowski space, that is $\mathbb{R}^3$ endowed with the flat Lorentzian metric $$\langle\cdot,\cdot\rangle=dx_{1}^{2}+dx^2_2-dx_3^2,$$where $(x_1,x_2,x_3)$ are the canonical coordinates in $\mathbb{R}^n$. The non-degenerate metric of index one classifies the vectors of $\mathbb{R}^3$ into three types.
\begin{defn}\cite{lopez2013constant}
A vector $v\in\mathbb{L}^3$ is said to be:\\
1. spacelike if $\langle v,v\rangle>0$ or $v=0$;\\
2. timelike if $\langle v,v\rangle<0$;\\
3. lightlike if $\langle v,v\rangle=0$ and $v\neq0$.
\end{defn}
The modulus of $v$ is $|v|=\sqrt{|\langle v,v\rangle|}$.
\begin{defn}\cite{lopez2013constant}
An immersed surface $\Sigma$ in $\mathbb{L}^3$ is called spacelike if the induced metric on $\Sigma$ is positive-definite.
\end{defn}

Given a spacelike immersed surface $\Sigma$, by Proposition 12.1.5 in \cite{lopez2013constant}, $\Sigma$ is orientable. We can choose $\Sigma$ to be future oriented that means the unit normal vector field $N$ satisfying $\langle N, e_3\rangle>0$. Here $e_3=(0,0,1)$. Let $\overline{\nabla}$ and $\nabla$ denote the Levi-Civita connection in $\mathbb{L}^3$ and $\Sigma$, respectively. If $X,Y\in\mathfrak{X}(\Sigma)$, the Gauss and Weingarten formulae are
\begin{equation}
\overline{\nabla}_XY=\nabla_XY+\sigma(X,Y)=\nabla_XY-\langle AX,Y\rangle N
\end{equation}
and
\begin{equation}
AX=-\overline{\nabla}_XN,
\end{equation}
respectively, where $\sigma$ is the second fundamental form and $A:\mathfrak{X}(\Sigma)\rightarrow\mathfrak{X}(\Sigma)$ stands for the shape operator of $\Sigma$ with respect to $N$. The mean curvature and the Gaussian curvature are defined by
\begin{equation}
H=-\frac{1}{2}\mathrm{trace}(A)=-\frac12(\kappa_1+\kappa_2),\ \ \ K=-\rm{det}(A)=-\kappa_1\kappa_2.
\end{equation}

Let $u\in C^2(\Omega)$ be a function defined on a domain $\Omega\in\mathbb{R}^2$ and consider the surface $\Sigma_u=(x,y,u(x,y))$. The coefficients of the first fundamental form are
\begin{equation}
E=1-u_x^2, \ \ \ F=-u_xu_y, \ \ \ G=1-u_y^2.
\end{equation}
Thus $EG-F^2=1-u_x^2-u_y^2=1-|\nabla u|^2$ and since the immersion is spacelike, $|\nabla u|^2<1$ on $\Omega$. The future-directed normal is given by
\begin{equation}
N(x,y,u(x,y))=\frac{(u_x,u_y,1)}{\sqrt{1-|\nabla u|^2}}=\frac{(\nabla u,1)}{\sqrt{1-|\nabla u|^2}}.
\end{equation}
With this normal, the mean curvature $H$ and Gaussian curvature $K$ satisfy
\begin{equation}\label{eq13'}
\mathrm{div}\left(\frac{(\nabla u,1)}{\sqrt{1-|\nabla u|^2}}\right)=2H
\end{equation}
and
\begin{equation}\label{eq K}
K=-\frac{u_{xx}u_{yy}-u_{xy}^2}{(1-|\nabla u|^2)^2},
\end{equation}
respectively, where $\rm{div}$ is the Euclidean divergence in $\mathbb{R}^2$.

As mentioned previously, every compact spacelike surface $\Sigma$ in $\mathbb{L}^3$ with simple closed boundary contained in a hyperplane can be regarded as the graph of a solution $u(x,y)$ to $\eqref{eq13}$. There are more interesting facts on compact spacelike surfaces in $\mathbb{L}^3$ with constant mean
curvature spanning a given boundary curve(see \cite{lopez2013constant}).

From now on, we assume $u$ to be a solution to $\eqref{eq13}$ with $H>0$ in a convex domain $\Omega$. For $H<0$, we can consider $-u$ and our theorem still holds. By the maximum principle, $u$ has a interior minimal point, which is a point of nonpositive Gaussian curvature.

In the rest of this section, based on the local comparison technique found in \cite{chen1982convexity}, we will investigate the local behavior of the difference of two spacelike graphs in a neighborhood of the  point, where they have the second contact.
\begin{lem}\label{lem21}
Let $u(x,y), v(x,y)$ satisfy the same spacelike constant mean curvature equation(the first equation in $\eqref{eq13}$ or $\eqref{eq13'}$). Without loss of generality, we assume that $u,v$ have a second-order contact at $P_0=(x_0,y_0,u(x_0,y_0))$ with $(x_0,y_0)=(0,0)$. Then by changing coordinate $(x,y)$ into $(\xi,\eta)$ linearly, the difference $u-v$ around $(\xi,\eta)=(0,0)=(x,y)$ is given by
 \begin{equation}\label{eq21}
 u-v=\mathbb{Re}(\lambda\cdot(\xi+\eta i)^n+o(\xi^2+\eta^2)^{\frac{n}{2}}),
\end{equation}
where $n\geq3$, $\lambda$ is a complex number and $\xi+\eta i$ is the complex coordinate.
\end{lem}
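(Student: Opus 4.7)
The plan is to reduce the statement to the classical asymptotic formula for solutions of two-dimensional linear elliptic PDEs at an isolated zero of finite order, after linearizing the CMC equation by subtraction.

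First I would set $w = u - v$ and derive a linear elliptic equation for $w$. Writing the spacelike CMC equation as $\partial_i F^i(Du) = 2H$ with $F^i(p) = p_i/\sqrt{1-|p|^2}$ and subtracting the equations for $u$ and $v$, the fundamental theorem of calculus applied to
\begin{equation*}
F^i(Du) - F^i(Dv) = \left(\int_0^1 F^i_{p_j}(tDu + (1-t)Dv)\,dt\right)\partial_j w
\end{equation*}
yields, upon taking another derivative, an identity of the form $a^{ij}(x)\partial_{ij}w + b^i(x)\partial_i w = 0$ with smooth coefficients and $a^{ij}$ uniformly positive definite on $\{|Du|,|Dv|<1\}$, since the Jacobian $F^i_{p_j}$ is positive definite there. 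Because $u$ and $v$ have a second-order contact at $P_0$, $w$ and all its derivatives up to order two vanish at the origin, so $w$ vanishes to order at least three. Assuming $u \not\equiv v$ in a neighborhood, the strong unique continuation property for uniformly elliptic equations in the plane forces the vanishing order $n$ to be finite, with $n \geq 3$.

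Next, I would perform a linear change of coordinates $(x,y) \mapsto (\xi,\eta)$ that diagonalizes $a^{ij}(0,0)$ to $\delta^{ij}$, so that in the new coordinates the principal part of the operator at the origin is the standard Laplacian $\Delta_{\xi\eta}$. With this normalization in place, the Hartman--Wintner asymptotic formula (equivalently, the Bers expansion for two-dimensional linear elliptic operators, which is precisely the tool underlying the comparison lemma of \cite{chen1982convexity}) gives a leading expansion
\begin{equation*}
w(\xi,\eta) = P_n(\xi,\eta) + o\bigl((\xi^2+\eta^2)^{n/2}\bigr),
\end{equation*}
where $P_n$ is a non-trivial homogeneous harmonic polynomial of degree $n$ in $(\xi,\eta)$.

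Finally, the space of real homogeneous harmonic polynomials of degree $n$ in two real variables is two-dimensional, spanned by $\operatorname{Re}(\xi+i\eta)^n$ and $\operatorname{Im}(\xi+i\eta)^n$, so $P_n(\xi,\eta) = \operatorname{Re}\bigl(\lambda(\xi+i\eta)^n\bigr)$ for a unique $\lambda \in \mathbb{C}$, which substituted into the expansion gives \eqref{eq21}. The main obstacle here is the asymptotic expansion step itself: to justify the $o$-remainder one needs the Campanato-type iteration on spherical harmonic modes that underlies the Hartman--Wintner theorem, together with the reduction of the $b^i$ terms to a perturbation of $\Delta$, and this is precisely the content of the local comparison technique of \cite{chen1982convexity} which the paper explicitly invokes.
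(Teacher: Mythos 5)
Your proposal is correct and follows essentially the same route as the paper: linearize by writing $u-v$ as a solution of a homogeneous linear elliptic equation via integration along the segment joining the data of $u$ and $v$, normalize the principal part at the origin to the Laplacian by a linear change of coordinates, and invoke the Bers (equivalently Hartman--Wintner) expansion to obtain a leading non-zero homogeneous harmonic polynomial of degree $n\geq 3$, which in two variables must be $\operatorname{Re}\bigl(\lambda(\xi+i\eta)^n\bigr)$. The only cosmetic difference is that you interpolate the flux $F^i$ in divergence form whereas the paper interpolates the full non-divergence-form operator $F(\tau)$ and verifies ellipticity of the averaged coefficients by Cauchy--Schwarz; both yield the same class of equation and the same conclusion.
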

\begin{proof}
Let $w=u-v$. Since $u,v$ solve the same constant mean curvature equation, we have
\begin{equation}\label{eq22}
\begin{split}
0&=(1-u_x^2-u_y^2)(u_{xx}+u_{yy})+(u_x^2u_{xx}+u_y^2u_{yy}+2u_xu_yu_{xy})-2H(\sqrt{1-|Du|^2})^3\\
&=(1-u_y^2)u_{xx}+(1-u_x^2)u_{yy}+2u_xu_yu_{xy}-2H(\sqrt{1-u_x^2-u_y^2})^3,
\end{split}
\end{equation}
\begin{equation}\label{eq23}
0=(1-v_y^2)v_{xx}+(1-v_x^2)v_{yy}+2v_xv_yv_{xy}-2H(\sqrt{1-v_x^2-v_y^2})^3.
\end{equation}
Define $r(\tau),s(\tau),t(\tau),p(\tau),q(\tau)$ for $0\leq\tau\leq1$ by
\begin{equation}\label{eq24}
\begin{split}
&r(\tau)=(1-\tau)v_{xx}+\tau u_{xx},\ \ \ s(\tau)=(1-\tau)v_{xy}+\tau u_{xy},\ \ \ t(\tau)=(1-\tau)v_{yy}+\tau u_{yy},\\
&p(\tau)=(1-\tau)v_{x}+\tau u_{x},\ \ \ \ q(\tau)=(1-\tau)v_{y}+\tau u_{y},
\end{split}
\end{equation}
and consider the function
\begin{equation}\label{eq25}
F=F(\tau)=(1-q^2)r+2pqs+(1-p^2)t-2H(\sqrt{1-p^2-q^2})^3.
\end{equation}
Then we get
\begin{equation}\label{eq26}
\begin{split}
0&=F(1)-F(0)=\int_0^1\frac{\pt F}{\pt\tau}d\tau\\
&=a_{11}w_{xx}+2a_{12}w_{xy}+a_{22}w_{yy}+b_1w_x+b_2w_y,
\end{split}
\end{equation}
with
\begin{equation}\label{eq27}
\begin{split}
&a_{11}=\int_0^1(1-q^2)d\tau,\ \ a_{12}=\int_0^1pqd\tau,\ \ a_{22}=\int_0^1(1-p^2)d\tau,\\
&b_1=-2\int_0^1[(pt-qs)-3H\sqrt{1-p^2-q^2}p]d\tau,\\
&b_2=-2\int_0^1[(qr-ps)-3H\sqrt{1-p^2-q^2}q]d\tau.
\end{split}
\end{equation}
Since $Dw=0$ at $(0,0)$, there exists a neighborhood, say $O(0,0)$, such that $(p,q)$ stays in the unit ball, i. e. $p^2+q^2<1$ over $O(0,0)$. Therefore, we have
\begin{equation}\label{eq28}
\begin{split}
a_{12}^2&=(\int_0^1pqd\tau)^2
\leq\int_0^1(p^2)d\tau\int_0^1(q^2)d\tau\\
&<\int_0^1(p^2)d\tau\int_0^1(1-p^2)d\tau
<\int_0^1(1-q^2)d\tau\int_0^1(1-p^2)d\tau=a_{11}a_{22}.
\end{split}
\end{equation}
Hence, $w$ satisfies a homogeneous elliptic equation
\begin{equation}\label{eq29}
Lw=a_{11}w_{xx}+2a_{12}w_{xy}+a_{22}w_{yy}+b_1w_x+b_2w_y,
\end{equation}
in $O(0,0)$.

Now, we transform $(x,y)$ into $(\xi,\eta)$ such that $\xi(0,0)=0$ and $\eta(0,0)=0$ and at $(0,0)$
\begin{equation}\label{eq210}
Lw=\left(\frac{\pt^2}{\pt\xi^2}+\frac{\pt^2}{\pt\eta^2}+b'_1\frac{\pt}{\pt\xi}+b'_2\frac{\pt}{\pt\eta}\right)w.
\end{equation}
Since the coefficient of $Lw$ and $w$ itself are analytic in $(x,y)$ as well as in $(\xi,\eta)$, we have the expansion around $(\xi,\eta)=(0,0)$ as follows,
\begin{equation}\label{eq211}
\begin{split}
Lw=&\left\{(1+\alpha_{11}\xi+\beta_{11}\eta+O(\xi^2+\eta^2))\frac{\pt^2}{\pt\xi^2}
+2(\alpha_{12}\xi+\beta_{12}\eta+O(\xi^2+\eta^2))\frac{\pt^2}{\pt\xi\pt\eta}\right.\\
&\left.+(1+\alpha_{22}\xi+\beta_{22}\eta+O(\xi^2+\eta^2))\frac{\pt^2}{\pt\eta^2}
+(\gamma_1+\delta_1\xi+\lambda_1\eta+O(\xi^2+\eta^2))\frac{\pt}{\pt\xi}\right.\\
&\left.+(\gamma_2+\delta_2\xi+\lambda_2\eta+O(\xi^2+\eta^2))\frac{\pt}{\pt\eta}\right\}w.
\end{split}
\end{equation}
By the Theorem I in \cite{bers1955local}, we know
\begin{equation}
w=w(\xi,\eta)=P_n(\xi,\eta)+o(\xi^2+\eta^2)^{\frac{n}{2}},
\end{equation}
where $P_n(\xi,\eta)$ is a non-zero harmonic homogeneous polynomial in $(\xi,\eta)$ of degree $n$. We know $n\geq3$, as $u$ and $v$ have a second contact at $(0,0)$. Thus the argument in page 82 of \cite{axler2013harmonic} tells us
\begin{equation}\label{eq213}
P_n(\xi,\eta)=\mathrm{Re}(\lambda\cdot(\xi+\eta i)^n).
\end{equation}
where $\lambda$ is a complex number. This, together with $\eqref{eq211}$, completes the proof of the lemma.
\end{proof}

Let $u-v$ to be defined on $D\in\mathbb{R}^2$ and $Z$ be the zero set of $u-v$ extended to the closure $\overline{D}$ of $D$. By Lemma \ref{lem21}, $Z$ divides a neighborhood $U$ of $(0,0)$ into at least six components on which the sign of $u-v$ alternate. However, Lemms \ref{lem21} does not tell us that $Z\cap U$ is a union of smooth arcs intersecting at $(0,0)$. We do not know if $Z$ may contain cusps at $(0,0)$. To exclude such irregular possibilities, we need the Lemma 2 in \cite{chen1982convexity}.
\begin{lem}\cite{chen1982convexity}\label{lem22}
Let $f=f(x,y)$ be a non-constant solution of a homogeneous quasilinear elliptic equation of the form
\begin{equation}\label{eq214}
Lf=a_{11}f_{xx}+2a_{12}f_{xy}+a_{22}f_{yy}+b_1f_x+b_2f_y=0, \ \ \text{in} \ \Omega,
\end{equation}
having analytic coefficients $a_{ij}$'s and $b_k$'s in $x,y$ and involving no zero order term. Then every interior critical point of $f$ is an isolated critical point.
\end{lem}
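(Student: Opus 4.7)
The plan is to argue by contradiction: suppose an interior critical point $p = (x_0, y_0) \in \Omega$ of $f$ fails to be isolated, so that some sequence of distinct points $p_k \to p$ also satisfies $\nabla f(p_k) = 0$. Set $w := f - f(p)$. Because $L$ carries no zero-order term, $Lw = Lf = 0$, and moreover $w(p) = 0$ together with $\nabla w(p) = 0$.

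Since the $a_{ij}$ and $b_k$ are analytic in $(x,y)$, elliptic regularity in the analytic category (Petrowski's theorem) makes $f$, and hence $w$, real-analytic near $p$, so $w$ admits a convergent Taylor expansion there. Invoking Bers' local theorem in exactly the form used in the proof of Lemma \ref{lem21} — after a linear change of coordinates centered at $p$ that normalizes the principal part of $L$ at $p$ to the Laplacian — the leading nonzero Taylor homogeneous component of $w$ must be a harmonic polynomial. Writing $\zeta = \xi + i\eta$ for the resulting complex coordinate, this leading component has the form $\mathrm{Re}(\lambda \zeta^n)$ with $\lambda \in \mathbb{C}\setminus\{0\}$ and some integer $n \geq 2$ (the bound $n\geq 2$ coming from $\nabla w(p)=0$). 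Since the remaining Taylor terms are homogeneous of degree $\geq n+1$, term-by-term differentiation yields
\begin{equation*}
\nabla w(\xi, \eta) = \nabla \mathrm{Re}(\lambda \zeta^n) + O(|\zeta|^{n}).
\end{equation*}

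The finish is a quantitative lower bound on the leading piece. The gradient $\nabla \mathrm{Re}(\lambda \zeta^n)$ is homogeneous of degree $n-1$ and, because $\lambda \neq 0$ and $n \geq 2$, vanishes only at the origin; compactness of the unit circle produces a constant $c > 0$ with $|\nabla \mathrm{Re}(\lambda \zeta^n)| \geq c|\zeta|^{n-1}$. Combining with the preceding display gives $|\nabla w(\zeta)| \geq c|\zeta|^{n-1} - C|\zeta|^{n} > 0$ for all sufficiently small $|\zeta| \neq 0$, directly contradicting the assumption that $\nabla f(p_k) = 0$ along a sequence $p_k \to p$.

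The step demanding the most care is the passage from the Bers asymptotic $w - \mathrm{Re}(\lambda \zeta^n) = o(|\zeta|^n)$ to the sharper derivative bound $\nabla w - \nabla \mathrm{Re}(\lambda \zeta^n) = O(|\zeta|^n)$, since in general one cannot differentiate an $o$-estimate. The analyticity of $w$ is what rescues the argument: the remainder is genuinely the tail of an absolutely convergent series of homogeneous polynomials of degree $\geq n+1$, so termwise differentiation is legitimate. Without the analytic hypothesis one would instead be forced into a pseudo-analytic function argument on the complex derivative $w_\xi - i w_\eta$, which is exactly the sort of complication the analyticity assumption is designed to avoid.
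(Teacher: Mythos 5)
Your argument is essentially correct, but note that the paper itself offers no proof of this lemma: it is quoted verbatim from Chen's paper, where (as in the classical Hartman--Wintner/Bers treatment) the result is proved by studying the complex gradient $f_x - i f_y$ as a pseudo-analytic function via the similarity principle, so that its zeros are isolated like those of a holomorphic function. That route works under much weaker (H\"older) regularity of the coefficients. Your route instead leans fully on the analyticity hypothesis: analytic elliptic regularity makes $w=f-f(p)$ real-analytic, the lowest-degree homogeneous Taylor component $P_n$ is forced to be harmonic for the frozen principal part (one sees this most cleanly by substituting the Taylor series into $Lw=0$ and reading off the degree-$(n-2)$ part, rather than by citing Bers, whose conclusion is only an $o(|\zeta|^n)$ statement -- though in the analytic setting the two coincide, as you observe), and then $|\nabla w|\ge n|\lambda||\zeta|^{n-1}-C|\zeta|^{n}>0$ on a punctured neighborhood since $(\mathrm{Re}\,\lambda\zeta^n)_\xi - i(\mathrm{Re}\,\lambda\zeta^n)_\eta = n\lambda\zeta^{n-1}$. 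This is a legitimate, more elementary proof in the setting at hand; what it buys is avoiding pseudo-analytic function theory entirely, at the cost of not covering the non-analytic case. Two small points you should make explicit: (i) you need $\Omega$ connected together with unique continuation for analytic functions to guarantee $w\not\equiv 0$ near $p$ (so that a leading nonzero Taylor component exists at all) from the hypothesis that $f$ is non-constant on $\Omega$; (ii) the linear change of coordinates normalizing $a_{ij}(p)$ to $\delta_{ij}$ preserves analyticity of the coefficients and of $w$, so the Taylor/termwise-differentiation step survives the change of variables. Neither is a gap of substance.
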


Using the previous two lemmas as well as the implicit function theorem, we see that the zero set $Z\cap U$ of $u-v$ consists of at least three smooth arcs intersecting at $(0,0)$ and dividing $U$ into at least six sectors. Furthermore, the zero set $Z$ is globally a union of smooth arcs.

\section{Nonuniqueness of the minimal point}
In this section, by using Lemma \ref{lem21} and Lemma \ref{lem22}, we will prove a sufficient and necessary condition for the nonuniqueness of minimal point of the solutions $v_t(t\in[0,1])$ to
\begin{equation}\label{eq31}
\begin{cases}
  \mathrm{div}(\frac{Dv}{\sqrt{1-t^2|Dv|^2}})=2H,\ \  t|Dv|<1 \ \text{in} \ \Omega,  \\
  v=0, \ \ \ \text{on} \ \ \pt\Omega.
\end{cases}
\end{equation}
Let $u_t=tv_t$ for $t>0$. Then $u_t$ satisfy
\begin{equation}\label{eq32}
\begin{cases}
  \mathrm{div}(\frac{Du}{\sqrt{1-|Du|^2}})=2tH,\ \  |Du|<1 \ \text{in} \ \Omega,  \\
  u=0, \ \ \ \text{on} \ \ \pt\Omega.
\end{cases}
\end{equation}
\begin{prop}\label{prop31}
There always exists a unique solution $v_t$ to $\eqref{eq31}$ satisfying
\begin{equation}\label{eq33}
t|Dv_t|\leq 1-\theta_0<1, \ \text{in} \ \overline{\Omega}, \ \parallel v_t\parallel_{C^{2,\alpha}(\overline{\Omega})}\leq C, \ \text{for all} \ t\in[0,1],
\end{equation}
where $C,\theta_0,\alpha$ are positive constants independent of $t$.
\end{prop}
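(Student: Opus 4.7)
The plan is to reduce the proposition to the transformed family \eqref{eq32} via the substitution $u_t = tv_t$. For each $t>0$, a smooth solution $u_t$ of \eqref{eq32} (a standard CMC Dirichlet problem with mean curvature $tH$) is produced directly by Theorem 3.6 of \cite{bartnik1982spacelike}, and one then sets $v_t = u_t/t$. At $t=0$ the system \eqref{eq31} degenerates to the Poisson equation $\Delta v_0 = 2H$ with $v_0|_{\partial\Omega} = 0$, whose unique classical solution furnishes $v_0$. Uniqueness for each fixed $t$ is a standard comparison argument: if $v,\tilde v$ both solve \eqref{eq31}, integrating the linearization of the operator along the segment from $\tilde v$ to $v$ (exactly as in \eqref{eq26}--\eqref{eq29}) shows $w = v - \tilde v$ satisfies a homogeneous linear elliptic equation with vanishing boundary data, whence $w \equiv 0$.

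The heart of the argument is the uniform gradient estimate. For the boundary bound I would apply the spherically symmetric barriers of \cite{bartnik1982spacelike} to \eqref{eq32} with the mean curvature taken to be $tH$, and track their Proposition 3.1 through the range $tH \in [0,H]$: the auxiliary radius $\varepsilon = \varepsilon(\Omega, H)$ can be chosen independently of $t \in [0,1]$, yielding
\[
\max_{\partial\Omega}|Du_t|\ \leq\ \frac{1-tH\varepsilon^{3}}{\sqrt{\varepsilon^{4}+(1-tH\varepsilon^{3})^{2}}}\ \leq\ 1-\theta_0
\]
for some $\theta_0>0$ independent of $t$. For the interior step, the graph quantity $\nu_3 = 1/\sqrt{1-|Du_t|^2}$ satisfies $\Delta_{M_t}\nu_3 = \nu_3 \|A\|^2 \geq 0$ (the second term in (1.4) vanishes since $tH$ is constant in $x$), so the maximum principle on $M_t$ forces $|Du_t|$ to attain its maximum on $\partial\Omega$. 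Translating back gives $t|Dv_t(x)| = |Du_t(x)| \leq 1-\theta_0$ in $\overline{\Omega}$, uniformly in $t \in [0,1]$.

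With the gradient bound secured, \eqref{eq31} becomes uniformly elliptic: its coefficient matrix $(1-t^2|Dv_t|^2)\delta_{ij}+t^2(v_t)_i(v_t)_j$ has eigenvalues pinched between a constant multiple of $\theta_0^{2}$ and $1$, with smooth dependence on $Dv_t$. A uniform $L^\infty$ bound on $v_t$ follows by comparison with an explicit radial hyperbolic-cap barrier on a disk containing $\Omega$ (analogous to the explicit formula displayed after Theorem \ref{thm12}), whose minimum value stays bounded as $t$ varies. De Giorgi--Nash--Moser theory for quasilinear divergence-form equations then yields a uniform $C^{1,\alpha}$ estimate, and a Schauder bootstrap upgrades this to the claimed $C^{2,\alpha}$ bound. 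The main obstacle is the uniformity in $t$ of the Bartnik--Simon boundary gradient estimate; once this is verified by carefully retracing their barrier construction across $tH \in [0, H]$, the remaining ingredients are standard.
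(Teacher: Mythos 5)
Your proposal is correct in outline and shares the paper's skeleton: the substitution $u_t=tv_t$, existence from Theorem 3.6 of \cite{bartnik1982spacelike}, the degenerate linear Poisson problem at $t=0$, and uniqueness via the integrated linearization. Where you diverge is in the two quantitative steps. For the uniform gradient bound the paper simply asserts that the constants in the Bartnik--Simon and Gilbarg--Trudinger estimates for $u_t$ can be taken independent of $t$; you instead retrace the barrier construction across $tH\in[0,H]$ and add the interior maximum principle for $\nu_3$, which is more work but actually substantiates the uniformity the paper takes for granted. For the $C^{2,\alpha}$ bound on $v_t$ the paper uses a cleaner device you do not: it freezes the coefficient, writing $\mathrm{div}(b(x)Dv_t)=2H$ with $b(x)=(1-|Du_t|^2)^{-1/2}$, which is uniformly elliptic and uniformly H\"older by the already-established $C^{2,\alpha}$ bound on $u_t$, so the \emph{linear} Schauder estimate (GT Theorem 6.6) plus the linear sup bound (GT Theorem 3.7) finishes in two lines. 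Your direct quasilinear route (De Giorgi--Nash--Moser then Schauder applied to \eqref{eq31}) also works, but you should be explicit about one point: the quantity you control is $t|Dv_t|\le 1-\theta_0$, not $|Dv_t|$ itself, which a priori could be of size $(1-\theta_0)/t$; the argument survives only because the nonlinearity $p\mapsto p/\sqrt{1-t^2|p|^2}$ is uniformly elliptic with constants depending only on $\theta_0$ on the whole set $\{t|p|\le 1-\theta_0\}$, so the quasilinear $C^{1,\alpha}$ theory needs no separate bound on $\sup|Dv_t|$. Stating this explicitly (or simply adopting the frozen-coefficient trick) would close the only soft spot in your write-up.
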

\begin{proof}
By Theorem 3.6 in \cite{bartnik1982spacelike}, Theorem 13.8 in \cite{trudinger1983elliptic} and Theorem 12.2.2 in \cite{lopez2013constant}, there is a unique solution $u_t\in C^{2,\alpha}(\overline{\Omega})$ to the problem $\eqref{eq32}$ with
\begin{equation}\label{eq34}
|Du_t|<1-\theta_0<1, \ \ \text{in} \ \ \overline{\Omega},\ \ \ \parallel u_t\parallel_{C^{2,\alpha}(\overline{\Omega})}\leq C
\end{equation}
where $C,\theta_0,\alpha$ are positive constants independent of $t$.

Put $v_t=t^{-1}u_t$. Then $v_t$ satisfies $\eqref{eq31}$. By putting
\begin{equation}\label{eq35}
b(x)=(1-|Du_t|^2)^{-\frac12},
\end{equation}
we regard $v_t$ as a unique solution to the linear elliptic Dirichlet problem:
\begin{equation}\label{eq36}
\begin{cases}
  \mathrm{div}(b(x)Dv_t)=2H,\ \  \ \text{in} \ \Omega,  \\
  v_t=0, \ \ \ \text{on} \ \ \pt\Omega.
\end{cases}
\end{equation}
In view of $\eqref{eq34}$, using the Schauder global estimate(see Theorem 6.6 in \cite{trudinger1983elliptic}), we get
\begin{equation}\label{eq37}
\parallel v_t\parallel_{C^{2,\alpha}(\overline{\Omega})}\leq C(\sup\limits_\Omega|v_t|+2H).
\end{equation}
Also, it follows from Theorem 3.7 in \cite{trudinger1983elliptic} that
\begin{equation}\label{eq38}
 \sup\limits_\Omega|v_t|\leq C.
 \end{equation}
 Therefore, we get $\eqref{eq33}$ for $t\in(0,1]$. In the case that $t=0$, $\eqref{eq31}$ is a linear problem. Hence there exists a unique solution $v_0\in {C^{\infty}(\overline{\Omega})}$ to $\eqref{eq31}$. This completes the proof.
\end{proof}

 Before proving the sufficient and necessary condition for nonuniqueness of the minimal point of $v_t$, we need the following lemmas.
 \begin{lem}\label{lem31}
 Let $t$ belong to (0,1]. If $Dv_t=0$ at some point $p\in\Omega$, then the Gaussian curvature $K_t(p)$ of the graph $\Sigma_{v_t}=(x,y,v_t(x,y)$ at $p$ does not vanish.
 \end{lem}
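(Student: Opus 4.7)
The strategy is to argue by contradiction via comparison with a hyperbolic cylinder, in the spirit of Sakaguchi's cylinder argument for the Euclidean CMC case. Assume $Dv_t(p) = 0$ and $K_t(p) = 0$. Since $Dv_t(p) = 0$, the rescaled function $u_t := t v_t$ satisfies the standard CMC equation $\eqref{eq32}$ with mean curvature $tH$, inherits the critical point at $p$, and has vanishing Hessian determinant there (second derivatives merely scale by $t$). Translating so $p = 0$, equation $\eqref{eq32}$ at $p$ yields $(u_t)_{xx}(0) + (u_t)_{yy}(0) = 2tH$; combined with $\det D^2 u_t(0) = 0$, a rotation makes the Hessian diagonal with $(u_t)_{xx}(0) = 2tH$ and $(u_t)_{yy}(0) = 0$.

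The comparison surface is the hyperbolic cylinder graph
\[
u^*(x,y) := u_t(0) + \tfrac{1}{2tH}\bigl(\sqrt{1 + 4t^2 H^2 x^2} - 1\bigr),
\]
a smooth entire spacelike graph independent of $y$, which a direct computation shows to solve $\eqref{eq32}$ and to match $u_t$ at $p$ up to second order. Setting $w := u_t - u^*$, Lemma \ref{lem21} gives, in suitable linear coordinates $(\xi,\eta)$ near $p$,
\[
w = \mathrm{Re}\bigl(\lambda(\xi+i\eta)^n\bigr) + o\bigl((\xi^2+\eta^2)^{n/2}\bigr), \qquad n \geq 3,\ \lambda \neq 0.
\]
Hence the zero set of $w$ near $p$ consists of at least three smooth arcs through $p$, dividing a neighborhood of $p$ into at least six sectors on which the sign of $w$ alternates. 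Because $w$ satisfies the homogeneous linear elliptic equation $\eqref{eq29}$ with analytic coefficients, Lemma \ref{lem22} guarantees that interior critical points of $w$ are isolated, so globally the zero set of $w$ in $\bar\Omega$ is a finite union of smooth arcs meeting only at isolated interior nodes.

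Next I analyze the boundary and count sign components. On $\partial\Omega$ one has $w = -g(x)$ with $g$ strictly convex, $g(0) = u_t(p) < 0$, and $g(x) \to +\infty$ as $|x| \to \infty$; thus $g$ has exactly two zeros $\pm x_0$, and $\{w > 0\} \cap \partial\Omega$ consists of at most two open arcs lying in the strip $\{|x| < x_0\}$. The strong maximum principle for $Lw = 0$ rules out $\{w > 0\}$ components whose closure is disjoint from $\partial\Omega$. An open-and-closed argument on each such boundary arc further shows that any component meeting the arc must contain a whole one-sided neighborhood of it, so distinct components meet disjoint arcs; hence there are at most two global $\{w > 0\}$ components in $\Omega$. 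Since at $p$ there are at least three local $\{w > 0\}$ sectors, pigeonhole forces two of them, $S$ and $S'$, into the same global component $C$, with a $\{w < 0\}$ sector between them at $p$.

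The contradiction is now topological. I take a simple arc $\gamma \subset C$ from a point of $S$ to a point of $S'$ and a small arc $\gamma' \subset S \cup \{p\} \cup S'$ through $p$ joining the same endpoints. The closed curve $\gamma \cup \gamma'$ carries $w \geq 0$, with equality only at $p$, and bounds a region $R \subset \Omega$ which, by the local sector picture at $p$, must contain some $\{w < 0\}$ sector, whichever side of $\gamma \cup \gamma'$ is enclosed (either the sector wedged between $S$ and $S'$, or the remaining sectors on the opposite side). Thus $w$ takes negative values in $R$, yet $Lw = 0$ in $R$ with $w \geq 0$ on $\partial R$ forces $w \geq 0$ in $R$ by the strong maximum principle, the desired contradiction, so $K_t(p) \neq 0$. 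I expect the most delicate step to be the open-and-closed argument bounding the number of global sign components of $w$ by the boundary sign changes of $g$; once that is in place, the pigeonhole and Jordan-curve conclusion is forced.
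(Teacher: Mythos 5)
Your proposal is correct and follows essentially the same route as the paper's own proof: after reducing to $u_t=tv_t$, one compares with the upper branch of a hyperbolic cylinder of radius $\tfrac{1}{2tH}$ tangent to second order at $p$ with generators along the zero-curvature direction, invokes Lemma \ref{lem21} and Lemma \ref{lem22} to get at least three positive sectors of the difference near $p$, and contradicts this with the bound of at most two boundary arcs of positivity coming from the convexity of $\Omega$ and the shape of the cylinder's zero set (two parallel lines), via the maximum principle. The only difference is presentational: the paper counts components of $\{u_t>v\}$ inside $\Omega\cap\Omega'$ (``at least three vs.\ at most two''), while you work on all of $\Omega$ and unpack the same contradiction through the pigeonhole and Jordan-curve step.
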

\begin{proof}
Since $t$ is positive, it suffices to show this for $u_t=tv_t$. Recall that graph of $u_t$ has constant mean curvature $tH$. Let $p$ be a critical point of $u_t$ with $K_t(p)=0$.

Consider the upper connected component of a hyperbolic cylinder in $\mathbb{L}^3$, $S$, with radius $r=\frac{1}{(2tH)}$, tengent to $\Sigma_{u_t}$ at $p$ and such that the line generators are parellel to the zero principal curvature direction of $\Sigma_{u_t}$ at $p$. Recall that each connected component of a hyperbolic cylinder is an entire graph over $\mathbb{R}^2$ with constant mean curvature $|H|=\frac{1}{(2r)}$ and zero Gaussian curvature.

In general, the intersection of $S$ and $\mathbb{R}^2$ should be a branch of a hyperbola or two parallel lines. In our case, it should be the latter one, as $S$ touches $u_t$ at its critical point $p$. Hence, $S\cap\mathbb{R}^2$ divides $\mathbb{R}^2$ into three domains and suppose that the piece of $S$ with negative height is the graph of a function $v\in C^\infty(\Omega'), v<0$.

Define $D=\Omega\cap\Omega'$. On the one hand, by the convexity of $\Omega$, we see $\pt(\Omega\cap\Omega')$ consists of at most four arcs, each of which belongs to $\pt\Omega$ or $\pt\Omega'$ alternatively. Consider $A=\{(x,y)\in\Omega\cap\Omega'|u_t(x,y)>v(x,y)\}$. Since $u_t=0$ on $\pt\Omega$ and $v=0$ on $\pt\Omega'$, there are at most two components of $A$, each of which meets the boundary $\Omega\cap\Omega'$. On the other hand, by previous construction, $u_t$ and $v$ have a second-order contact at $p$. Lemma \ref{lem21} and Lemma \ref{lem22} tell us $A$ has at least three components each of which meets $\Omega\cap\Omega'$. Thus we get a contradiction. This completes the proof.
\end{proof}

Now, we see that there is no critical point of $v_t$ with Gaussian curvature vanishing for $t\in(0,1]$. What about the case of $t=0$?
\begin{lem}\label{lem32}
Every critical point $p$ of $v_0$ is a minimal point, i.e. the Gaussian curvature $K_0(p)$ of the graph $\Sigma_{v_0}$ is negative at $p$.
\end{lem}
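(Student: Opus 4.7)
My plan is to mimic Lemma \ref{lem31} using the limiting parabolic cylinder as the comparison surface and to exploit Bers's expansion together with an Euler--characteristic count on the comparison domain. Suppose for contradiction that $p\in\Omega$ is a critical point of $v_0$ with $K_0(p)\ge 0$. Because $\Delta v_0=2H>0$, the eigenvalues of $\mathrm{Hess}\,v_0$ at $p$ satisfy $\lambda_1+\lambda_2=2H$ and $\lambda_1\lambda_2\le 0$, so $\lambda_1\ge 2H$ and $\lambda_2\le 0$; after rotating coordinates I may assume the eigenvectors are the coordinate axes and consider
\[
w(x,y)=v_0(p)+H(x-x_p)^2.
\]
Then $\Delta w=2H$, so $v_0-w$ is harmonic with $(v_0-w)(p)=0$, $\nabla(v_0-w)(p)=0$, and $\mathrm{Hess}(v_0-w)(p)=\mathrm{diag}(-\lambda_2,\lambda_2)$. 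Applying the argument of Lemma \ref{lem21} to the Laplacian yields $v_0-w=P_n+o(|z-p|^n)$ with $P_n$ a nonzero harmonic homogeneous polynomial; here $n\ge 3$ when $K_0(p)=0$ and $n=2$ when $K_0(p)>0$.

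For the global picture I set $\Omega'=\{w<0\}=\{|x-x_p|<\sqrt{-v_0(p)/H}\}$, a nonempty strip since $v_0(p)<0$ by the strong maximum principle, and $D=\Omega\cap\Omega'$. The boundary $\partial D$ has an even number $j$ of alternating arcs: on $\partial\Omega\cap\overline{\Omega'}$ one has $v_0-w>0$ (\emph{A-arcs}) and on $\partial\Omega'\cap\overline{\Omega}$ one has $v_0-w<0$ (\emph{B-arcs}); since $\partial\Omega'$ consists of two parallel lines and $\Omega$ is convex, $j\le 4$. By the maximum principle for the harmonic function $v_0-w$, no component of $Z=\{v_0=w\}\cap D$ can be confined to the interior of $D$, so every $Z$-component meets $\partial D$ and the graph $G=\partial D\cup Z$ is connected.

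The argument is then closed by an Euler count on $G\subset D$. Denote by $c$ the number of interior zeros of $v_0-w$ (isolated by Lemma \ref{lem22}, each with a well-defined Bers order $n_i\ge 2$). Then $V=j+c$, $E=\tfrac{3j}{2}+\sum_i n_i$, and hence $F=1+\tfrac{j}{2}-c+\sum_i n_i$. Every $\partial D$-arc lies in its own face, since adjacent $\partial D$-arcs are separated by the $Z$-arc leaving their junction; interior faces are forbidden by the maximum principle; together these give $F=j$, i.e. $\sum_i n_i-c=\tfrac{j}{2}-1$. Combined with $\sum_i n_i\ge 2c$ and $c\ge 1$ (as $p$ itself is an interior zero), this forces $j=4$, $c=1$, and $n_1=2$. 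In particular the degenerate case $K_0(p)=0$ is eliminated at once, since it requires $n_1\ge 3$. The remaining saddle case $K_0(p)>0$ then forces $Z$ to consist of exactly four arcs from $p$ to the four junctions, and the final (and most delicate) step is the sign bookkeeping: in the adapted coordinates the ``north'' local sector at $p$ has $P_2=\tfrac{-\lambda_2}{2}(x^2-y^2)<0$ and is therefore labelled $B$, but the planar cyclic matching of the arcs at $p$ with the junctions places that sector in the face bounded by an $A$-arc on $\partial\Omega$. Since $v_0-w$ has a definite sign on each face, this is a contradiction, and $K_0(p)<0$ follows.
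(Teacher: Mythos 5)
Your argument works in the degenerate case $K_0(p)=0$, where your $w$ coincides with the paper's and $v_0-w$ vanishes to second order, so Bers's expansion gives $n_1\ge 3$ and your Euler count $\sum_i n_i-c=\tfrac j2-1\le 1$ is immediately contradictory. But in the saddle case $K_0(p)>0$ there is a genuine gap, and it originates in your choice of comparison function: $w=v_0(p)+H(x-x_p)^2$ matches only the trace of the Hessian, so $v_0-w$ has a \emph{nondegenerate} saddle at $p$ ($n_1=2$), which is exactly consistent with your count ($j=4$, $c=1$, $n_1=2$). You then try to close the case by sign bookkeeping, asserting that the cyclic matching of the four nodal arcs at $p$ with the four corners of $\partial D$ forces the negative ``north'' sector into the face bounded by a positive $A$-arc. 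That assertion is false as stated: planarity only forces the matching to be one of the four cyclic rotations, and for two of them (the matchings shifted by one corner in either direction) every face receives a boundary arc of the \emph{same} sign as its local sector at $p$ --- the negative N and S sectors attach to the two $B$-arcs and the positive E and W sectors attach to the two $A$-arcs in a ``pinwheel'' configuration. Nothing in your argument excludes this, so no contradiction is reached when $K_0(p)>0$.

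The fix is the one the paper uses: take $w$ to be the full second-order Taylor polynomial, $w=v_0(p)+\tfrac12\lambda_1(x-x_p)^2+\tfrac12\lambda_2(y-y_p)^2$, which still satisfies $\Delta w=2H$, and whose sublevel set $\Omega'=\{w<0\}$ (a strip when $\lambda_2=0$, the region between the two branches of a hyperbola when $\lambda_2<0$) still meets the convex $\Omega$ in a domain with at most four alternating boundary arcs. Then $P=v_0-w$ is harmonic and vanishes together with all its first and second derivatives at $p$, so Bers forces $n_1\ge 3$ in \emph{both} cases, giving at least three positive and three negative local sectors; comparing with the at most two $A$-arcs and two $B$-arcs on $\partial(\Omega\cap\Omega')$ (the paper counts components of $\{P>0\}$; your Euler-characteristic count works equally well here) yields the contradiction uniformly, with no sign bookkeeping needed.
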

\begin{proof}
Let $p$ be a critical point of $v_0$. Then $K_0(p)=-((v_0)_{xx}(v_0)_{yy}-(v_0)_{xy}^2)$ by $\eqref{eq K}$. Suppose that $K_0(p)\geq0$. For simplicity, by translation and rotation of the coordinate, we may assume that $p=(0,0)$ and $[D_{ij}v_0]=\rm{diag}[\lambda_1,\lambda_2]$, where $\lambda_1+\lambda_2=2H>0$, $\lambda_1>0$ and $\lambda_2\leq0$. Then $v_0(x,y)=w(x,y)+P(x,y)$, where $w(x,y)=v_0(0,0)+\frac12\lambda_1x^2+\frac12\lambda_2y^2$ and $P(x,y)$ is a harmonic function in $\Omega$. Consider
\begin{equation}\label{eq39}
A=\{(x,y)\in\Omega|P(x,y)>0\}, \ \ \ B=\{(x,y)\in\Omega|P(x,y)<0\}.
\end{equation}
Since $P(x,y)$ vanishes up to second order derivatives at $(0,0)$ and $P(x,y)$ is real analytic, it follows from Lemma \ref{lem21} and Lemma \ref{lem22} that
\begin{equation}\label{eq310}
\begin{split}
&\text{Both} \ A \ \text{and} \ B \ \text{have at least three components}\\
&\text{each of which meets the boundary}  \  \pt\Omega.
\end{split}
\end{equation}
Put $\Omega'=\{(x,y)\in\mathbb{R}^2|w(x,y)<0\}$. Since $\Omega$ is convex and $w$ is a quadratic function with $\lambda_1>0$ and $\lambda_2\leq0$, we see that $\pt(\Omega\cap\Omega')$ consists of at most four arcs each of which belongs to $\pt\Omega$ or $\pt\Omega'$ alternatively. Let $A'=\{(x,y)\in\Omega\cap\Omega'|P(x,y)>0\}$. Since $v_0=0$ on $\pt\Omega$ and $w=0$ on $\pt\Omega'$, there are at most two components of $A'$ each of which meets the boundary $\pt(\Omega\cap\Omega')$. This contradicts $\eqref{eq310}$. This completes the proof.
\end{proof}

Now, we can prove the sufficient and necessary condition for nonuniqueness of the minimal point of $v_t$.
\begin{thm}\label{thm31}
Let $t$ belong to $[0,1]$. The solution $v_t$ has more than two minimal points if and only if there exists a saddle point $p\in\Omega$, i.e. $Dv_t(p)=0$ and $K_t(p)>0$.
\end{thm}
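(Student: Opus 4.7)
The plan is to deduce the theorem from the Poincar\'e--Hopf index theorem applied to the $C^{1}$ vector field $X:=-Dv_t$ on $\overline\Omega$, after using the preceding lemmas to classify all interior critical points. At any critical point $p$ of $v_t$ the equation \eqref{eq31} collapses to $\Delta v_t(p)=2H>0$, since $|Dv_t(p)|=0$; hence $\mathrm{Hess}\,v_t(p)$ has strictly positive trace and in particular cannot be negative definite. Combining this with Lemma \ref{lem31} (for $t\in(0,1]$) and Lemma \ref{lem32} (for $t=0$), which together assert $K_t(p)\neq 0$ at every critical point, the formula \eqref{eq K} evaluated at $p$ forces $\det(\mathrm{Hess}\,v_t(p))\neq 0$. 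Consequently each critical point is nondegenerate, is either a local minimum (exactly when $K_t(p)<0$) or a saddle (exactly when $K_t(p)>0$), and is therefore isolated, so by compactness the critical set is finite.

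Next I would set up the boundary behaviour needed for the index theorem. Using the linear reformulation \eqref{eq36}, $v_t$ solves $\mathrm{div}(b\,Dv_t)=2H$ with $b>0$, so the strong maximum principle gives $v_t<0$ in $\Omega$, and Hopf's lemma gives $\partial v_t/\partial\nu>0$ on $\partial\Omega$, where $\nu$ is the outer unit normal. In particular $|Dv_t|>0$ on $\partial\Omega$, so the critical set lies entirely inside $\Omega$ and the vector field $X=-Dv_t$ is nowhere zero on $\partial\Omega$ and points strictly inward.

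Finally I would apply Poincar\'e--Hopf: convexity of $\Omega$ makes $\overline\Omega$ a topological closed disc, so $\chi(\overline\Omega)=1$. A nondegenerate minimum of $v_t$ contributes index $+1$ to $X$ and a nondegenerate saddle contributes $-1$, so
\begin{equation*}
\#\{\text{minima of }v_t\}-\#\{\text{saddles of }v_t\}=1.
\end{equation*}
Both implications in the stated equivalence are immediate: the number of minima exceeds one precisely when at least one saddle is present. Note this also recovers the $t=0$ case, where Lemma \ref{lem32} forbids saddles and the identity then gives exactly one minimum.

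I expect the only real content to be the first, classification step; its serious input is already hidden inside Lemmas \ref{lem31}--\ref{lem32}, and the trace computation $\Delta v_t(p)=2H$ merely serves to rule out a local maximum. Everything after that is formal topology plus the standard maximum-principle/Hopf-lemma boundary analysis, so no additional substantive obstacle should arise.
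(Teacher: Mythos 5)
Your argument is correct, but it is genuinely different from the one in the paper. The paper splits the equivalence into two separate arguments: for the \emph{if} direction it looks at the nodal set of $v_t-v_t(p)$ near the saddle (four sectors by Lemma \ref{lem21}/Lemma \ref{lem22}-type structure), pushes the positive components to $\pt\Omega$ by the maximum principle, and concludes that $\{v_t<v_t(p)\}$ has at least two components, each containing a minimum; for the \emph{only if} direction it assumes no saddle exists, classifies all critical points as nondegenerate minima via Lemmas \ref{lem31} and \ref{lem32}, and then runs a Morse-theoretic level-set argument ($\pt L_m$ diffeomorphic for all regular values $m\in(m_0,0)$, disconnected near $m_0$ but connected near $0$). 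You replace both halves with a single Poincar\'e--Hopf count for $X=-Dv_t$ on the disc, using the same essential inputs (the trace identity $\Delta v_t(p)=2H$ at critical points, the nonvanishing of $K_t$ from Lemmas \ref{lem31}--\ref{lem32}, and Hopf's lemma on $\pt\Omega$) to make every critical point a nondegenerate minimum or saddle. Your route is shorter, handles both directions at once, and yields the sharper quantitative identity $\#\{\text{minima}\}=\#\{\text{saddles}\}+1$, which the paper's argument does not; the paper's route is more elementary in that it avoids invoking the index theorem and reuses the nodal-set machinery already developed in Section 2. One small caution: state the inward-pointing version of Poincar\'e--Hopf carefully (in dimension two the sum of indices is still $\chi(\overline\Omega)=1$ because negating the field preserves indices), and note that the boundary count is really a winding-number computation along the convex curve $\pt\Omega$, which is where convexity and $Dv_t\cdot\nu>0$ enter.
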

\begin{proof}
It follows from Hopf's boundary point lemma that $Dv_t\cdot\nu$ is positive on $\pt\Omega$. There $v_t$ does not have minimal point on the boundary $\pt\Omega$.

``$\text{\bf{if part}}$". Let $p\in\Omega$ be a point with $Dv_t(p)=0$ and $K_t(p)>0$. Then there exists an open neighborhood $U$ of $p$ in which the zero set of $\widetilde{v}_t=v_t-v_t(p)$ consists of two smooth arcs intersecting at $p$ and divides $U$ into four sections. Consider the open set $E=\{(x,y)\in\Omega|\widetilde{v}_t>0\}$. It follows from the maximum principle that each component of $E$ has to meet the boundary $\pt\Omega$. Accordingly, we see that the open set $G=\{(x,y)\in\Omega|\widetilde{v}_t<0\}$ has more than two components. This shows that $v_t$ has more than two minimal points.

``$\text{\bf{only if part}}$". Suppose that $v_t$ has more than two minimal points and there is no point $p$ with $Dv_t(p)=0$ and $K_t(p)>0$. By Lemma \ref{lem31} and Lemma \ref{lem32}, we see that each critical point of $v_t$ is a minimal point. Since $Dv_t$ does not vanish on $\pt\Omega$, then Lemma \ref{lem31} and Lemma \ref{lem32} imply that every critical point of $v_t$ is isolated and the number of critical(minimal) points is finite, say $\{P_1,\cdots,p_N\}$. Hence, we have
\begin{equation}\label{eq311}
Dv_t(x,y)\neq0, \ \ \text{for all} \ (x,y)\in\Omega-\{P_1,\cdots,p_N\}.
\end{equation}
Put $m_0=\max\{v_t(P_j)|1\leq j\leq N\}$. Consider the level set $L_m=\{(x,y)\in\Omega|v_t(x,y)<m\}$ for $m_0<m<0$. It follows from $\eqref{eq311}$ and Theorem 3.1 in \cite{milnor1963morse} that the boundary $\pt L_m$ is a smooth manifold for $m_0<m<0$ and $\{\pt L_m\}$ are diffeomorphic to each other. Since $K_t(P_j)$ is negative, if $m$ is near $m_0$, $L_m$ has more than two components. On the other hand, if $m$ is near $0$, $\pt L_m$ is diffeomorphic to $\pt\Omega$ and $L_m$ is connected. This is a contradition. So we complete the proof.
\end{proof}

Now, Lemma \ref{lem31}, Lemma \ref{lem32} and Theorem \ref{thm31} tell us the non-existence of the critical point described in the first question of the first section is equivalent to the uniqueness for the critical point of the solution to $\eqref{eq13}$, which will be proved in the next section.

\section{Proof of Theorem \ref{thm11}}
In view of Lemma \ref{lem31} and Lemma \ref{lem32} and Theorem \ref{thm31}, it suffices to show that the set of minimal point of the solution consists of only one point. Put $I=[0,1]$. Divide $I$ into two sets $I_1$ and $I_2$ as follows.
\begin{equation}\label{eq41}
\begin{split}
&I_1=\{t\in I|v_t \ \text{has only one minimal point in} \ \Omega\}\\
&I_2=\{t\in I|v_t \ \text{has more than two minimal points in} \ \Omega\}.
\end{split}
\end{equation}
Then $I=I_1+I_2$ and $I_1\cap I_2=\varnothing$. Lemma \ref{lem32} and Theorem \ref{thm31} imply that $0\in I_1$ and $I_1$ is not empty.

On the one hand, $I_2$ is open in $I$. That is, for any $t_0\in I_2$, there exists a constant $\varepsilon>0$ such that $(t_0-\varepsilon,t_0+\varepsilon)\subset I_2$. In fact, if it is not so, we can assume that there exists a sequence of solutions $\{v_{t_n}\}$ with only one minimal point and $t_n\in(t_0-\frac1n,t_0+\frac1n)$ for some positive $t_0\in I_2$. By Lemma \ref{lem31} and Theorem \ref{thm31}, $v_{t_n}$ has only one critical point. By compactness and Lemma \ref{lem31}, we take subsequence of ${v_{t_n}}$ such that
\begin{equation}
p_n\rightarrow p, \  \  Dv_{t_n}(p_n)=0, \   \  K_{t_n}(p_n)<0 ,\  \  Dv_{t_0}(p)=0, \   \  K_{t_0}(p)<0.
\end{equation}
Since $t_0\in I_2$, there exists another point $q\in U(q)\subseteq\Omega$ such that
\begin{equation}
q_n\rightarrow q, \  \ Dv_{t_n}(q_n)\rightarrow Dv_{t_0}(q)=0.
\end{equation}
By uniqueness for the critical point of $v_{t_n}$, we can take subsequence of $\{v_{t_n}\}$ such that $v_{t_n}$ are all monotone in the line $l(p_n,q_n)$. Then there exists a sequence of points $\{s_n|s_n\in l(p_n,q_n)\}$ such that
\begin{equation}
|Dv_{t_n}(s_n)|\leq |Dv_{t_n}(q_n)|\rightarrow 0, \  \  |K_{t_n}(s_n)|=\frac{|Dv_{t_n}(q_n)|}{|p_{n}-q_{n}|}\rightarrow 0.
\end{equation}
Therefore, there should be a point $s\in l(p,q)$ satisfies
\begin{equation}
Dv_{t_0}(s)=0, \  \  K_{t_0}(s)=0.
\end{equation}
This is a contradiction with the Lemma \ref{lem31}.

On the other hand, $I_2$ is closed in $I$. In fact, let $\{t_j\}$ be a sequence of points in $I_2$ such that $t_j$ converges to $t_0$ as $j$ goes to $\infty$. Theorem \ref{thm31} and the compactness imply that there exists a subsequence $\{t_k\}$, a sequence $\{p_k\}$ and a point $p\in\Omega$ such that
\begin{equation}\label{eq42}
p_k\rightarrow p \ \text{as} \ k\rightarrow\infty,\ \ \ Dv_{t_k}(p_k)=0, \ \text{and} \ K_{t_k}(p_k)>0.
\end{equation}
By continuty, we have
\begin{equation}\label{eq43}
Dv_{t_0}(p)=0, \ \text{and} \ K_{t_0}(p)\geq0.
\end{equation}
Since $Dv_{t_0}\neq0$ on $\pt\Omega$, $p\in\Omega$. Therefore it follows from Lemma \ref{lem31} and Lemma \ref{lem32}, Theorem \ref{thm31} and $\eqref{eq43}$ that $t_{0}\in I_2$. This shows that $I_2$ is closed in $I$.

Hence, $I_2$ must be $\varnothing$ or $I$. Since $I_1$ is not $\varnothing$, $I_1=I$. This completes the proof.

\section{Sharp $C^0$ and $C^1$ estimates}
In \cite{payne1979some}, the authors derived a maximum principle for a function $\Phi(x;\alpha)$ defined as
\begin{equation}\label{eq61}
\Phi(x;\alpha)=\int_0^{q^2}\frac{g(\xi)+2\xi g'(\xi)}{\rho(\xi)}d\xi+\alpha\int_0^uf(\eta)d\eta,
\end{equation}
where $g>0,\rho>0,f$ are functions and $u$ satisfies the following elliptic equation
\begin{equation}\label{eq62}
(g(q^2)u_i)_i+\rho(q^2)f(u)=0, \ \ q^2=u_iu_i=|Du|^2.
\end{equation}

In our case, we can take $g(\xi)=(1-\xi)^{-\frac12},\rho=1,f=-2H$. Then
\begin{equation}
\Phi(x;\alpha)=2(\frac{1}{\sqrt{1-|Du|^2}}-1-\alpha Hu).
\end{equation}
In particular, $\Phi:=\Phi(x;1)=2(\frac{1}{\sqrt{1-|Du|^2}}-1-Hu)$.

Theorem 4 in \cite{payne1979some} gave us
\begin{equation}
(\delta_{ij}+\frac{u_iu_j}{1-|Du|^2})\Phi_{ij}+W_k\Phi_k\geq 0,
\end{equation}
where $W_k$ is a vector function uniformly bounded in $\Omega$. It follows that $\Phi(x,1)$ takes its maximum value on $\pt\Omega$. Together with $\eqref{eq61}$, we know $\Phi(x;1)$ takes its maximum value where $|Du|^2=\max\limits_{\pt\Omega}|Du|^2$. It follows that, at any point $x\in\Omega$, we have
\begin{equation}
-Hu\leq\frac{1}{\sqrt{1-\max\limits_{\pt\Omega}|Du|^2}}-\frac{1}{\sqrt{1-|Du|^2}}.
\end{equation}
So, at the critical point, we get
\begin{equation}\label{eq65}
-Hu_{min}\leq\frac{1}{\sqrt{1-q_{max}^2}}-1,
\end{equation}
where $u_{min}=\min\limits_{\Omega}u$ and $q_{max}=\max\limits_{\pt\Omega}|Du|$.

Now, we want to derive the upper bound for $|Du|_{max}^2$. Suppose $\Phi(x;\alpha)$ attains its maximum at $p\in\pt\Omega$, then $|Du|(p)=q_{max}$. On the one hand, by strong maximum principle, we have at $p$,
\begin{equation}\label{eq66}
\frac{\pt\Phi(x;\alpha)}{\pt n}=2\frac{g+2q^2g'}{\rho}u_nu_{nn}+fu_n\geq 0,
\end{equation}
where $\frac{\pt}{\pt n}$ or a subscript $n$ denotes the outward directed normal derivative on $\pt\Omega$ and the equality holds if and only if $\Phi(x;\alpha)=\text{constant}$. On the other hand, making use of $\eqref{eq62}$ evaluated on $\pt\Omega$, we have
\begin{equation}
(g+2q^2g')u_{nn}+gKu_n+\rho f=0.
\end{equation}
Together with $\eqref{eq66}$, this leads to
\begin{equation}
\frac{\pt\Phi(x;\alpha)}{\pt n}=-(2Kgu_n^2+fu_n)\geq 0.
\end{equation}
Applying to our case, we get
\begin{equation}
 \frac{q_{max}}{\sqrt{1-q_{max}^2}}\leq\frac{H}{K(p)}\leq\frac{H}{K_{min}}.
 \end{equation}
 So
 \begin{equation}\label{eq610}
 q_{max}^2\leq\frac{H^2}{H^2+K_{min}^2}.
 \end{equation}
Therefore, the left inequality in $\eqref{eq16}$ follows from $\eqref{eq65}$ and $\eqref{eq610}$. And the equality holds if and only if the the boundary is a circle. In fact, if the equality holds, then $\Phi(x;1)=\text{constant}$ on $\pt\Omega$ from the strong maximum principle. From $\eqref{eq61}$, $u_ n=\text{constant}$ on $\pt\Omega$. So $\pt\Omega$ is a circle according to the Theorem 2 and Remark 1 in \cite{serrin1971symmetry}. Conversely, if $\pt\Omega$ is a circle, then the solution $u$ is radially symmetric. So $u_ n=\text{constant}$ on $\pt\Omega$, and then the equality in $\eqref{eq610}$ follows from the divergence theorem.

To derive the upper bound of $u_{min}$ in the same way above, we need a minimum principle for $\Phi(x;1)$. First, we need the following lemma.
\begin{lem}\label{lem61}\cite{payne1979some}
\begin{equation}
(\delta_{ij}+\frac{u_iu_j}{1-|Du|^2})\Phi_{ij}(x, \alpha)+\widehat{W_k}\Phi_k(x, \alpha)=4H^2(\alpha-1)(\alpha-2)\frac{1}{\sqrt{1-|Du|^2}},
\end{equation}
where $\widehat{W_k}$ is a vector function which is singular at the critical point of $u$.
\end{lem}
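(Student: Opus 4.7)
The plan is to verify this by direct computation: apply $L := (\delta_{ij} + u_iu_j/W^2)\partial_i\partial_j$ (with $W := \sqrt{1-|Du|^2}$) to $\Phi(x;\alpha) = 2W^{-1} - 2 - 2\alpha Hu$, and rearrange using the CMC equation \eqref{eq13} in non-divergence form together with its first derivative. Set $P_k := u_lu_{lk}$ and $Q := u_iu_ju_{ij}$, so that $\Phi_k = 2P_k/W^3 - 2\alpha Hu_k$. The crucial observation is that $Lu$ is precisely the left-hand side of \eqref{eq13} written as $Lu = 2HW$.

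First I would differentiate $Lu = 2HW$ in $x_k$ and contract with $u_k$; this produces a third-order identity expressing $u_l\Delta u_l + u_iu_ju_ku_{ijk}/W^2$ in terms of $HQ$, $|P|^2$, $Q^2$ and negative powers of $W$, which lets every third derivative of $u$ in $L\Phi$ be eliminated. Then I would compute $\Phi_{ij}$ explicitly, apply $L$, and use the CMC relation $\Delta u = 2HW - Q/W^2$ to collapse the $\alpha$-linear second-derivative contributions to $-4\alpha H^2 W$. The expected intermediate form is
\begin{equation*}
L\Phi = \frac{2|D^2u|^2}{W^3} + \frac{4|P|^2}{W^5} + \frac{2Q^2}{W^7} - \frac{4HQ}{W^4} - 4\alpha H^2 W.
\end{equation*}

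Next I would produce the factor $(\alpha-1)(\alpha-2)$ by passing to an orthonormal frame at the chosen point with $Du = (q,0)$. Setting $a := u_{11}$ and $b := u_{12}$, the CMC equation forces $u_{22} = 2HW - a/W^2$, while $P_1 = qa$, $P_2 = qb$, $Q = q^2 a$, $|D^2u|^2 = a^2 + 2b^2 + u_{22}^2$. Substituting and simplifying with $W^2 + q^2 = 1$ should reduce $L\Phi$ to
\begin{equation*}
L\Phi = \frac{4a^2}{W^7} + \frac{4b^2}{W^5} - \frac{4H(2+q^2)}{W^4}\,a + \frac{4H^2(2-\alpha+\alpha q^2)}{W}.
\end{equation*}
Subtracting $4H^2(\alpha-1)(\alpha-2)/W$ should leave an expression that factors cleanly as $\widehat W_1\Phi_1 + \widehat W_2\Phi_2$ with $\widehat W_1,\widehat W_2$ carrying explicit $1/q$ factors; this $1/|Du|$ factor is the source of the announced singularity at critical points. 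Reverting to arbitrary coordinates then gives the lemma.

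The hard part will be the bookkeeping at this last stage: one must verify that after subtracting $4H^2(\alpha-1)(\alpha-2)/W$ the remainder factors exactly through $\Phi_1$ and $\Phi_2$, leaving no unaccounted $a$, $b$, or scalar constant. This delicate matching is precisely what forces the constant to be $(\alpha-1)(\alpha-2)$, and it also explains why the coefficients $\widehat W_k$ must be singular at $Du = 0$: the factorization requires $1/|Du|$ factors.
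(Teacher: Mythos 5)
Your computation is correct: I checked both the intermediate identity
$L\Phi = \tfrac{2|D^2u|^2}{W^3}+\tfrac{4|P|^2}{W^5}+\tfrac{2Q^2}{W^7}-\tfrac{4HQ}{W^4}-4\alpha H^2 W$
(obtained by eliminating third derivatives through the $u_k$-contracted derivative of $Lu=2HW$) and the final factorization in the gradient-aligned frame, where one finds explicitly $-\widehat{W}_2=\tfrac{2b}{qW^2}$ and $-\widehat{W}_1=\tfrac{2}{qW}\bigl(\tfrac{a}{W^3}-\alpha H+2(\alpha-1)H-Hq^2\bigr)$, both carrying the announced $1/q$ singularity at critical points, with the remainder collapsing exactly to $4H^2(\alpha-1)(\alpha-2)/W$. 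The paper offers no proof of this lemma, citing Payne--Philippin instead, and your direct verification is essentially their original argument, so there is nothing substantive to compare.
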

From  Lemma \ref{lem61} and Hopf maximum principle, we conclude that $\Phi(x, \alpha)$ takes its minimum value either on the boundary $\pt\Omega$, or at the unique critical point of $u$ in $\Omega$ when $\alpha\in[1,2]$. What if the second alternative happens? We answer this in the following theorem whose Euclidean version was proved by Xi-Nan Ma in \cite{ma2000sharp}.
\begin{thm}\label{thm61}
Let $u\in C^{\infty}(\overline{\Omega})$ is a solution to $\eqref{eq13}$. If $\Phi(x;1)$ attains its minimum at the unique critical point in $\Omega$, then $\Phi(x;1)$ is a constant on $\overline{\Omega}$.
\end{thm}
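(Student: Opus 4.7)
The plan is to adapt Xi-Nan Ma's analyticity argument \cite{ma2000sharp} from the Euclidean setting. The coefficients of equation \eqref{eq13} are real analytic in $u$ and $Du$, hence $u \in C^\omega(\Omega)$ by standard elliptic analytic regularity, and therefore $\Phi(x;1) = 2(1-|Du|^2)^{-1/2} - 2 - 2Hu$ is also real analytic on $\Omega$. Since $\Omega$ is connected, it suffices to show that every partial derivative of $\Phi(x;1)$ vanishes at the unique critical point $p_0$; then analyticity forces $\Phi \equiv \Phi(p_0)$ on $\overline{\Omega}$.

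The first and second order derivatives of $\Phi$ at $p_0$ can be computed directly. Writing $W = (1-|Du|^2)^{-1/2}$, one finds $\Phi_k = 2W^3 u_l u_{lk} - 2H u_k$, which vanishes at $p_0$ because $Du(p_0)=0$, so $D\Phi(p_0)=0$ automatically. A further differentiation gives $\Phi_{ij}(p_0) = 2u_{ki}(p_0)u_{kj}(p_0) - 2H u_{ij}(p_0)$. In a principal frame where $D^2 u(p_0) = \mathrm{diag}(\lambda_1,\lambda_2)$, this becomes $\Phi_{ij}(p_0) = \mathrm{diag}\bigl(2\lambda_1(\lambda_1-H),\,2\lambda_2(\lambda_2-H)\bigr)$. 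Four constraints pin down the eigenvalues: (i) equation \eqref{eq13} at $p_0$ gives $\lambda_1+\lambda_2 = 2H$; (ii) $u$ attains its global minimum at $p_0$, so $\lambda_1,\lambda_2 \ge 0$; (iii) Lemma \ref{lem31} yields $\lambda_1\lambda_2 \neq 0$; (iv) since $p_0$ is also a minimum of $\Phi$, both diagonal entries of $\Phi_{ij}(p_0)$ must be nonnegative, which forces $\lambda_i=0$ or $\lambda_i\ge H$ for each $i$. Combining these gives $\lambda_1=\lambda_2=H$, hence $D^2 u(p_0) = H I$ and $D^2\Phi(p_0)=0$.

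The technical heart of the argument, and the main obstacle, is to show $D^N\Phi(p_0)=0$ for every $N\ge 3$. For this I would Taylor-expand $\Phi - \Phi(p_0)$ about $p_0$ and let $P_N$ be the lowest-order nonvanishing homogeneous term. Since $\Phi-\Phi(p_0)\ge 0$ near $p_0$, the polynomial $P_N$ must be nonnegative; in particular $N$ is even. Substituting the expansion into the elliptic identity $(\delta_{ij}+u_iu_jW^2)\Phi_{ij}+\widehat W_k\Phi_k=0$ of Lemma \ref{lem61} with $\alpha=1$, and using the established data $Du(p_0)=0$, $D^2u(p_0)=HI$, together with the vanishing of all derivatives of $\Phi$ of order $<N$, one aims to extract the leading-order relation $\Delta P_N = 0$: the principal symbol reduces to the Laplacian at $p_0$, and the contribution of the singular drift $\widehat{W}_k\Phi_k$ has either been killed by the already-known relations or is of strictly higher order in $|x-p_0|$. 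Once harmonicity of $P_N$ is established, we invoke the same complex-variable fact used in Lemma \ref{lem21}: a nonzero harmonic homogeneous polynomial of degree $N\ge 3$ in two real variables has the form $\mathrm{Re}(\lambda z^N)$ and therefore changes sign in every neighborhood of the origin. This contradicts $P_N\ge 0$, forcing $P_N\equiv 0$ and completing the induction.

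With every Taylor coefficient of $\Phi$ at $p_0$ vanishing, real analyticity and the connectedness of $\Omega$ yield $\Phi\equiv\Phi(p_0)$ on $\overline\Omega$. The delicate point on which the whole argument hinges is controlling the singular drift $\widehat W_k \Phi_k$ in the expansion at $p_0$; verifying that it does not pollute the harmonicity of the leading Taylor polynomial is the analogue of Ma's long computation in \cite{ma2000sharp} and constitutes the bulk of the technical work.
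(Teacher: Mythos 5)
Your overall strategy --- analytic regularity of $u$ and hence of $\Phi$, showing that every derivative of $\Phi$ vanishes at the unique critical point $P$, then invoking unique continuation --- is exactly the paper's, and your treatment of the first two orders (arriving at $D^2u(P)=HI$ and $D^2\Phi(P)=0$ from the minimality of $P$ for both $u$ and $\Phi$, the equation, and Lemma \ref{lem31}) matches Step 1 of the paper's proof. The gap is in the inductive step, which you correctly flag as the technical heart but do not carry out, and the mechanism you propose for it fails as stated. Writing $P_N$ for the first nonvanishing homogeneous Taylor term of $\Phi-\Phi(P)$, you want to substitute the expansion into the identity of Lemma \ref{lem61} and extract $\Delta P_N=0$ on the grounds that the singular drift term $\widehat{W}_k\Phi_k$ is of strictly higher order. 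It is not: near $P$ one has $|Du|\sim H|x-P|$ because $D^2u(P)=HI$, so a drift that is singular like $|Du|^{-2}$ times a quantity vanishing to first order is of size $|x-P|^{-1}$, while $\Phi_k=\partial_kP_N+O(|x-P|^{N})=O(|x-P|^{N-1})$; the product is $O(|x-P|^{N-2})$, exactly the order of $\Phi_{ij}$. The leading-order relation one extracts from $L\Phi=0$ is therefore not $\Delta P_N=0$ but an equation containing an extra term of the schematic form $c\,x_k\partial_kP_N/|x|^2=cNP_N/|x|^2$, and harmonicity of $P_N$ does not follow without actually computing that the relevant coefficient vanishes --- which is the content you have deferred.

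This is precisely why the paper (following Ma) does not run the induction through the $\Phi$-identity at all. Instead it differentiates equation \eqref{eq13} for $u$ repeatedly, records the explicit values of all derivatives of $u$ at $P$ up to order $n$ (for instance $u_{x_1^{2p}}(P)=(-1)^{p+1}(2p-1)[(2p-3)(2p-5)\cdots1]^2H^{2p-1}$ together with the ratio relations \eqref{eq644}--\eqref{eq645}), and uses these to express the $(n+1)$-st derivatives of $\Phi$ at $P$ in terms of those of $u$ and to derive the alternating identities $\Phi_{x_1^mx_2^{n+1-m}}(P)=-\Phi_{x_1^{m+2}x_2^{n-1-m}}(P)$. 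Those identities are exactly the statement that the leading Taylor polynomial is harmonic, and the explicit bookkeeping is unavoidable because the high derivatives of $(1-|Du|^2)^{-1/2}$ at $P$ involve products of lower-order derivatives of $u$ that must be known precisely. Your closing step --- a nonzero harmonic homogeneous polynomial $\mathrm{Re}(\lambda(\xi+\eta i)^N)$ of degree $N\ge1$ changes sign near the origin, contradicting $\Phi\ge\Phi(P)$ --- is correct and is essentially the paper's nodal-line argument; but the harmonicity it relies on is the part that remains unproved in your proposal.
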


By Theorem \ref{thm61}, we assume $\Phi(x;1)$ takes its minimum at $p'\in\pt\Omega$, then $|Du|(p')=q_{min}=\min\limits_{\pt\Omega}|Du|$ and
\begin{equation}\label{eq611}
-Hu_{min}\geq\frac{1}{\sqrt{1-q_{min}^2}}-1,
\end{equation}
\begin{equation}
\frac{\pt\Phi}{\pt n}(p';1)\leq 0,
\end{equation}
where the equality holds if and only if $\Phi(x;1)=\text{constant}$. As before, one can also get
\begin{equation}
 \frac{q_{min}}{\sqrt{1-q_{min}^2}}\geq\frac{H}{K(p')}\geq\frac{H}{K_{max}}.
 \end{equation}
 So
 \begin{equation}\label{eq614}
 q_{min}^2\geq\frac{H^2}{H^2+K_{max}^2},
 \end{equation}
where the equality holds if and only if the the boundary is a circle. Therefore, the right inequality in $\eqref{eq16}$ follows from $\eqref{eq611}$ and $\eqref{eq614}$.

For completeness, we will prove Theorem $\eqref{thm61}$ to end this paper. Our proof is similar to that  in \cite{ma2000sharp} except for the different signs in somewhere.

\begin{proof}[Proof of Theorem \ref{thm61}]
The proof consists of four steps. Assume the unique critical point be $P\in\Omega$.

Step 1: Derivatives of $\Phi$ up to the second order vanish at $P$.
 From the proof of Theorem \ref{thm11}, we can choose the coordinate at $P$ such that
 \begin{equation}\label{eq617}
 u_1(P)=u_2(P)=0,\ \ u_{11}>0, u_{22}>0, u_{12}=0.
 \end{equation}
 By direct computation, we have
 \begin{equation}\label{eq618}
 \Phi_1=2v^{-\frac32}u_iu_{i1}-2Hu_1=0,
 \end{equation}
  \begin{equation}\label{eq619}
 \Phi_2=2v^{-\frac32}u_iu_{i2}-2Hu_2=0,
 \end{equation}
\begin{equation}
 \Phi_{11}=\frac32v^{-\frac52}(2u_iu_{i1})(2u_ju_{j1})+2v^{-\frac32}u_{i1}^2+2v^{-\frac32}u_iu_{i11}-2Hu_{11}=2u_{11}^2-2Hu_{11},
 \end{equation}
 \begin{equation}\label{eq621}
 \Phi_{12}=\frac32v^{-\frac52}(2u_iu_{i1})(2u_ju_{j2})+2v^{-\frac32}u_{i1}u_{i2}+2v^{-\frac32}u_iu_{i12}-2Hu_{12}=0,
 \end{equation}
 \begin{equation}
 \Phi_{22}=\frac32v^{-\frac52}(2u_iu_{i2})(2u_ju_{j2})+2v^{-\frac32}u_{i2}^2+2v^{-\frac32}u_iu_{i22}-2Hu_{22}=2u_{22}^2-2Hu_{22},
 \end{equation}
 where $v=1-|Du|^2$. Since $\Phi$ attains its minimum at $P$, we get
 \begin{equation}
 \Phi_{11}(P)\Phi_{22}(P)-\Phi_{12}(P)\geq 0.
 \end{equation}
 Together with $\eqref{eq617}$, we know
 \begin{equation}\label{eq624}
 u_{11}(P)=u_{22}(P)=H,
 \end{equation}
 and
 \begin{equation}\label{eq625}
 \Phi_{11}(P)=\Phi_{22}(P)=0.
 \end{equation}

Step 2: Derivatives of $\Phi$ up to the fifth order vanish at $P$.
First we claim
\begin{equation}
\Phi_{x_1^kx_2^{3-k}}(P)=0, \ \ k=0,1,2,3.
\end{equation}
By $\eqref{eq617}$, $\eqref{eq624}$ and direct calculations, we have
\begin{equation}
\begin{split}
&\Phi_{x_1^3}(P)=4Hu_{x_1^3}, \\
&\Phi_{x_1^2x_2}(P)=4Hu_{x_1^2x_2}, \\
&\Phi_{x_1x_2^2}(P)=4Hu_{x_1x_2^2}, \\
&\Phi_{x_2^3}(P)=4Hu_{x_2^3}.
\end{split}
\end{equation}
Now, by differentiating $\eqref{eq13}$, we obtain
\begin{equation}
\begin{split}
&u_{x_1^3}=-u_{x_1^2x_2},\\
&u_{x_1x_2^2}=-u_{x_2^3}.
\end{split}
\end{equation}
Together with \eqref{eq618}, \eqref{eq619}, \eqref{eq621} and \eqref{eq625}, we can expand $\Phi$ in a neighborhood of $P$:
\begin{equation}\label{eq629}
\Phi(x_1,x_2;1)-\Phi(P;1)=\frac{r^3}{3!}(\Phi_{x_1^3}(P)\cos(3\phi)+\Phi_{x_1^2x_2}(P)\sin(3\phi))+O(r^4),
\end{equation}
where $(r,\phi)$ is polar coordinate. Suppose
\begin{equation}
\sqrt{(\Phi_{x_1^3}(P))^2+(\Phi_{x_1^2x_2}(P))^2}\neq 0,
\end{equation}
then \eqref{eq629} becomes
\begin{equation}\label{eq631}
\Phi(x_1,x_2;1)-\Phi(P;1)=A_3(P)\cos[3\phi-\beta_3]r^3+O(r^4),
\end{equation}
with
\begin{equation}
\begin{split}
&A_3(P)=\frac{\sqrt{(\Phi_{x_1^3}(P))^2+(\Phi_{x_1^2x_2}(P))^2}}{3!},\\
&\cos\beta_3=\frac{\Phi_{x_1^3}(P)}{\sqrt{(\Phi_{x_1^3}(P))^2+(\Phi_{x_1^2x_2}(P))^2}},\\
&\sin\beta_3=\frac{\Phi_{x_1^2x_2}(P)}{\sqrt{(\Phi_{x_1^3}(P))^2+(\Phi_{x_1^2x_2}(P))^2}}.
\end{split}
\end{equation}
From \eqref{eq631} we conclude that $\Phi$ has at least three nodal lines forming equal angles at $P$, but Lemma \ref{lem61} tells us that $\Phi$ takes its minimum value only on $\pt\Omega$ or at $P$, which is a contradiction. Thus $A_3(P)=0$. That is,
\begin{equation}
\Phi_{x_1^kx_2^{3-k}}(P)=0, \ \ k=0,1,2,3,
\end{equation}
and
\begin{equation}
u_{x_1^kx_2^{3-k}}(P)=0, \ \ k=0,1,2,3.
\end{equation}
Using the similar argument we can show
\begin{equation}
\begin{split}
0&=\Phi_{x_1^4}(P)=6H(u_{x_1^4}(P)+3H^3) \\
&=-\Phi_{x_1^2x_2^2}(P)=6H(u_{x_1^2x_2^2}(P)+H^3)\\
&=\Phi_{x_2^4}(P)=6H(u_{x_2^4}(P)+3H^3),
\end{split}
\end{equation}
\begin{equation}
\begin{split}
0&=\Phi_{x_1^3x_2}(P)=6Hu_{x_1^3x_2}(P)\\
&=-\Phi_{x_1x_2^3}(P)=6Hu_{x_1x_2^3}(P),\\
\end{split}
\end{equation}
\begin{equation}
\begin{split}
&u_{x_1^4}(P)=u_{x_2^4}(P)=-3H^3, \\
&u_{x_1^3x_2}(P)=u_{x_1x_2^3}(P)=0,\\
&u_{x_1^2x_2^2}(P)=-H^3,
\end{split}
\end{equation}
\begin{equation}
\Phi_{x_1^kx_2^{5-k}}(P)=u_{x_1^kx_2^{5-k}}(P)=0, \ \ k=0,1,2,3,
\end{equation}
and
\begin{equation}
\begin{split}
&\Phi_{x_1^5}(P)=-\Phi_{x_1^3x_2^2}(P)=\Phi_{x_1x_2^4}(P),\\
&\Phi_{x_1^4x_2}(P)=-\Phi_{x_1^2x_2^3}(P)=\Phi_{x_2^5}(P).
\end{split}
\end{equation}

Step 3: Now we assume all derivatives of $\Phi$ up to the n-th order vanish at $P$, where $n\geq 5$. Using the same argument as in the previous step, we have following relations.

If $n=2l$, $l\geq3$. Then
\begin{equation}\label{eq640}
\begin{split}
&u_{x_1^mx_2^{k-m}}(P)=u_{x_1^{k-m}x_2^m}(P)\\
&\text{for any} \ \ m=0,1,2,\ldots,k, \ \text{if} \ \ k=5,6,8,\ldots,2l,
\end{split}
\end{equation}
\begin{equation}
\begin{split}
&u_{x_1^mx_2^{k-m}}(P)=0\\
&\text{for any} \ \ m=0,1,2,\ldots,k, \ \text{if} \ \ k=5,7,9,\ldots,2l-1,
\end{split}
\end{equation}
\begin{equation}
\begin{split}
&u_{x_1^mx_2^{2p-m}}(P)=0\\
&\text{for any} \ \ m=1,3,5,\ldots,2p-1, \ \text{if} \ \ p=3,4,5,\ldots,l,
\end{split}
\end{equation}
\begin{equation}
\begin{split}
&u_{x_1^{2p}}(P)=(-1)^{p+1}(2p-1)[(2p-3)(2p-5)\ldots1]^2H^{2p-1}\\
&\text{for any} \ \ p=3,4,5,\ldots,l.
\end{split}
\end{equation}
When $l$ is even, we have for any $p=4,6,8,\ldots,l$
\begin{equation}\label{eq644}
\begin{split}
u_{x_1^{2p}}(P)\div u_{x_1^{2p-2}x_2^2}(P)&=(2p-1)\div1\\
u_{x_1^{2p-2}x_2^2}(P)\div u_{x_1^{2p-4}x_2^4}(P)&=(2p-3)\div3\\
&\vdots\\
u_{x_1^{p+2}x_2^{p-2}}(P)\div u_{x_1^px_2^p}(P)&=(p+1)\div(p-1),
\end{split}
\end{equation}
and for any $p=3,5,7,\ldots,l-1$, we have
\begin{equation}\label{eq645}
\begin{split}
u_{x_1^{2p}}(P)\div u_{x_1^{2p-2}x_2^2}(P)&=(2p-1)\div1\\
u_{x_1^{2p-2}x_2^2}(P)\div u_{x_1^{2p-4}x_2^4}(P)&=(2p-3)\div3\\
&\vdots\\
u_{x_1^{p+3}x_2^{p-3}}(P)\div u_{x_1^{p+1}x_2^{p-1}}(P)&=(p+2)\div(p-2).
\end{split}
\end{equation}
When $l$ is odd, we have the similar relation \eqref{eq644} and \eqref{eq645}.

If $n=2l+1$, $l\geq2$, by a similar argument we have \eqref{eq640}-\eqref{eq645} and
\begin{equation}
u_{x_1^mx_2^{2l+1-m}}(P)=0, \ \ \text{for any} \ \ m=0,1,2,\ldots,2l+1.
\end{equation}

Step 4: Derivatives of $\Phi$ of order n+1 vanish at $P$. We divide it into two parts according to whether $n$ is odd or even.

Case A: If $n=2l$, $l\geq 3$. By the inductive assumption, we have 
\begin{equation}
\begin{split}
&v_{x_1^mx_2^{k-m}}(P)=0\\
&\text{for any} \ \ m=0,1,2,\ldots,k, \ \ \text{if} \ \ k=1,3,5,\ldots,n-1.
\end{split}
\end{equation}
Then for any $m=0,1,2,\ldots,n+1$
\begin{equation}
\begin{split}
&(2v^{-\frac12})_{x_1^mx_2^{n+1-m}}(P)=-v^{\frac32}v_{x_1^mx_2^{n+1-m}}(P)\\
&=2v^{-\frac32}((n+1-m)Hu_{x_1^mx_2^{n+1-m}}+mHu_{x_1^mx_2^{n+1-m}})\\
&=2(n+1)Hu_{x_1^mx_2^{n+1-m}}.
\end{split}
\end{equation}
So 
\begin{equation}
\Phi_{x_1^mx_2^{n+1-m}}(P)=2nHu_{x_1^mx_2^{n+1-m}}(P).
\end{equation}
Now, by differentiating \eqref{eq13}, we obtain
\begin{equation}
u_{x_1^mx_2^{n+1-m}}(P)=-u_{x_1^{m+2}x_2^{n-1-m}}(P),\ \ \text{for} \ \ m=0,1,2,\ldots,n+1.
\end{equation}
Then 
\begin{equation}
\Phi_{x_1^mx_2^{n+1-m}}(P)=-\Phi_{x_1^{m+2}x_2^{n-1-m}}(P),\ \ \text{for} \ \ m=0,1,2,\ldots,n+1.
\end{equation}
Using Taylor expansion as in Step 2, we can conclude that the derivatives of $\Phi$ of order $n+1$ vanish at $P$.

Case B: If $n=2l+1$, $l\geq2$, so $n+1=2(l+1)$ is even. We first look for the relations among $\Phi_{x_1^mx_2^{n+1-m}}(P)$, where $m=0,2,4,\ldots,n+1$. Through computations, we have
\begin{equation}
\Phi_{x_1^{n+1}}(P)=2nH(u_{x_1^{n+1}}+(-1)^{l+1}(2l+1)[(2l-1)(2l-3)\ldots1]^2H^{2l+1}),
\end{equation}
and
\begin{equation}
\Phi_{x_1^{n-1}x_2^2}(P)=2nH(u_{x_1^{n-1}x_2^2}+(-1)^{l+1}[(2l-1)(2l-3)\ldots1]^2H^{2l+1}).
\end{equation}
Now, by differentiating \eqref{eq13}, we get
\begin{equation}
(\Delta u+u_iu_ju_{ij}v^{-1})_{x_1^{n-1}}(P)=(2Hv_{\frac12})_{x_1^{n-1}}(P).
\end{equation}
Together with the relations in Step 3, this leads to 
\begin{equation}
u_{x_1^{n+1}}+u_{x_1^{n-1}x_2^2}=(n+1)(-1)^l[(2l-1)(2l-3)\ldots1]^2H^{2l+1}.
\end{equation}
So 
\begin{equation}
\Phi_{x_1^{n+1}}(P)=-\Phi_{x_1^{n-1}x_2^2}(P).
\end{equation}
By a similar argument, it follows that
\begin{equation}
\Phi_{x_1^mx_2^{n+1-m}}(P)=-\Phi_{x_1^{m+2}x_2^{n-1-m}}(P),\ \ \text{for} \ \ m=0,2,4,\ldots,n+1.
\end{equation}
Then, using the same argument, we have
\begin{equation}
\Phi_{x_1^mx_2^{n+1-m}}(P)=-\Phi_{x_1^{m+2}x_2^{n-1-m}}(P),\ \ \text{for} \ \ m=0,1,2,\ldots,n+1.
\end{equation}
Now, as in Case A, we can show the derivatives of $\Phi$ of order $n+1$ vanish at $P$.

According to the unique continuation of analytic function, we know if $\Phi$ attains its minimum at $P$, then it must be a constant. This completes the proof.

\section{Acknowledgements}
The author is fully supported by China Scholarship Council(CSC) for visiting University of California, Santa Cruz. The author is also very grateful to his advisors Professor Xi-Nan Ma and Professor Jie Qing for their expert guidances and useful conversations.

\end{proof}
\nocite{*}

\bibliographystyle{amsplain}
\bibliography{reference}
\end{document}